\newtheorem{theorem}{Theorem}[section]
\newtheorem{lemma}[theorem]{Lemma}
\theoremstyle{definition}
\newtheorem{definition}[theorem]{Definition}
\theoremstyle{remark}
\newcommand{\N}{\mathbb{N}}              
\newcommand{\R}{\mathbb{R}}              
\newcommand{\hd}{\mathcal{H}}            
\newcommand{\B}{\mathcal{B}}             
\newcommand{\de}{\delta}                 
\newcommand{\De}{\Delta}                 
\newcommand{\ldq}{\textquotedblleft}     
\newcommand{\E}{\mathcal{E}}             
\newcommand{\f}{\mathbf{f}}              
\newcommand{\M}{\mathcal{M}}             %
\newcommand{\s}{\mathcal{S}}             
\newcommand{\dm}{\mathrm{d}\mu}
\newcommand{\la}{\lambda}                
\newcommand{\dom}{\text{dom}}            
\newcommand{\e}{\epsilon}                
\newcommand{\pa}{\partial}
\newcommand{\al}{\alpha}
\newcommand{\uline}{\underline}
\newcommand{\oline}{\overline}
\newcommand*\rfrac[2]{{}^{#1}\!/_{#2}} 
\numberwithin{equation}{section}
\begin{document}

\title[THE p-LAPLACIAN ON THE SIERPI\'NSKI GASKET]{EXISTENCE OF MULTIPLE SOLUTIONS OF A p-LAPLACIAN EQUATION ON THE SIERPI\'NSKI GASKET}

\author[A. Sahu]{Abhilash Sahu}
\address{Department of Mathematics, Indian Institute of Technology Delhi, Hauz Khas, New Delhi 110016, India}
\email{sahu.abhilash16@gmail.com}
\author[A. Priyadarshi]{Amit Priyadarshi}
\address{Department of Mathematics, Indian Institute of Technology Delhi, Hauz Khas, New Delhi 110016, India}
\email{priyadarshi@maths.iitd.ac.in}

\subjclass[2010]{Primary 28A80, 35J61, 35J92}

\keywords{Sierpi\'nski gasket, p-Laplacian, weak solution, p-energy, Euler functional}
\date{}

\begin{abstract}
In this paper we study the following boundary value problem involving the weak p-Laplacian.
\begin{equation*}
 \quad -M(\|u\|_{\mathcal{E}_p}^p)\Delta_p u = h(x,u) \; \text{in}\; \mathcal{S}\setminus\mathcal{S}_0; \quad u = 0 \; \mbox{on}\; \mathcal{S}_0,
\end{equation*}
where $\mathcal{S}$ is the Sierpi\'nski gasket in $\R^2$, $\mathcal{S}_0$ is its boundary. $M : \R \to \R$ defined by $M(t) = at^k +b$ and $a,b,k >0$ and $h : \s \times \R \to \R.$ We will show the existence of two nontrivial weak solutions to the above problem.
\end{abstract}
\maketitle

\section{Introduction}
In this article we will discuss the existence of weak solutions of the boundary value problem on the Sierpi\'nski gasket.
\begin{equation}\label{prob}
\begin{split}
  \quad -M(\|u\|_{\mathcal{E}_p}^p)\Delta_p u &= h(x,u) \; \text{in}\; \mathcal{S}\setminus\mathcal{S}_0, \\
  u &= 0  ~~ \text{on}~ \s_0,
 \end{split}
\end{equation}
where $\s$ is the Sierpi\'nski gasket in $\R^2$ and $\s_0$ is the boundary of the Sierpi\'nski gasket. $M : \R \to \R$ is defined by $M(t) = at^k +b$ for $a,b,k >0$ and $h : \s \times \R \to \R$ is defined as $h(x,u) = \la f(x)|u|^{q-2}u + g(x)|u|^{l-2}u,$ where $1<q<p<l$ and $\la >0.$
$\De_p$ denotes the weak $p$-Laplacian where $p>1.$ We will discuss more about this in the next section. If $u \in \dom_0(\E_p)$ (defined in the next section) and satisfies
 $$\lambda \int\limits_{\s} f(x)|u|^{q-2}uv \dm + \int\limits_{\s} g(x)|u|^{l-2}u v \dm \in M(\|u\|_{\E_p}^p)\E_p(u,v)$$
 for all $v \in \dom_0(\E_p),$ then we will call $u$ to be a weak solution of \eqref{prob}.

In this section we will discuss about brief literature review of Laplacian and p-Laplacian on fractal domains in particular on the Sierpi\'nski gasket and then a brief literature of Kirchhoff type equations on regular domains, that is, open connected domains with smooth boundary. Then we will give the arrangement of the paper.

The name \ldq fractal" was introduced by B. B. Mandelbrot for the first time in 1975.
Mandelbrot argued that fractal models occur in physics, biology, mechanics etc. The type of equations defined above occur in problems like elastic fractal media, fluid flow through fractal regions, reaction diffusion equations and waves on fractal objects. For more details about fractals and analysis on fractals readers are encouraged to read \cite{KF,kiga1,RS} and references theirin. These are some of the reasons due to which study of differential equations on fractal domains became fascinating for the past few decades.

An extensive amount of research has been done and still continuing on Laplacian and p-Laplacian on regular domains but compared to this a very less amount of literature is available on fractal domains. Probably the reason for this is that there is no well-known notion of Laplacian and p-Laplacian on general fractal domains. We can define the Laplacian on the Sierpi\'nski gasket and some p.c.f. fractals as in \cite{kiga1, kiga2, str} and we can define the p-Laplacian on the Sierpi\'nski gasket as in \cite{sw, hps} (which we have detailed in the next section).

Kigami and Lapidus \cite{KL} studied the eigenvalue problem on the Sierpi\'nski gakset and established an analogue of Weyl's classical theorem for the asymptotics of eigenvalues of Laplacians on p.c.f. self-similar fractals. In \cite{AT} Teplyaev studied the spectral properties of the Laplacian on infinite Sierpi\'nski gasket. He has shown that Laplacian with the Neumann boundary condition has pure point
spectrum and the set of eigenfunctions with compact support is complete. Falconer \cite{falc} studied the semilinear equations $\De u + u^p =0$ with zero Dirichlet boundary conditions and showed that they have nontrivial nonnegative solutions if $0 <\nu \leq 2$ and $p>1$ or if $\nu > 2$ and $1 < p < (\nu+2)/(\nu-2),$ where $\nu$ is the spectral dimension of the system. In \cite{fh} Falconer and Hu studied nonlinear diffusion equation $\pa u/\pa t = \De u + u^p$ where $p > 1$ on certain unbounded fractal domains. They have shown that there are nonnegative global solutions for nonnegative initial data if $p > 1+2/d_s,$ while solutions
blow up if $p \leq 1 + 2/d_s,$ where $d_s$ is the spectral dimension of the domain. In \cite{BRV} the authors have studied the problem 
$$ -\De u(x) = \la f(x,u(x)) + \eta g(x,u(x))\hspace{.21in} \forall x \in V\setminus V_0; ~~~~u|_{V_0} = 0$$ using Ricceri type three critical point theorem and have shown the existence of at least three solutions of this problem. In \cite{BRV2} the authors have shown the existence of infinitely many weak solutions to the elliptic problem $\De u(x) + a(x)u(x) = g(x)f(u(x))$ on the Sierpi\'nski gasket with zero Dirichlet boundary condition. Stancu-Dumitru in \cite{denisa1} studied the problem
\begin{align*}
-\De u(x) &= f(x)|u(x)|^{p-2}u(x) +(1-g(x))|u(x)|^{q-2}u(x) &\text{~for~~} x \in  V\setminus V_0\\
u &= 0 &\text{~for~~} x \in V_0
\end{align*}
where $\De$ is the Laplacian on $V, 1 < p < 2 < q$ are real numbers, $f, g \in C(V)$ satisfy 
$f^+ = \max\{f,0\} \neq 0$ and $ 0\leq g(x) < 1$ for all $x \in V.$ In \cite{denisa2} she studied 
\begin{align*}
-\De u(x) + g(u(x))& = \la f(u(x))&\text{~for~~} x \in  V\setminus V_0\\
u &= 0 &\text{~for~~} x \in V_0
\end{align*}
where $\la$ is a positive real number and $f, g$ are given functions. In both the problems she used variational method and Nehari manifold technique to show the existence of solutions. Molica Bisci and R\u adulescu \cite{br} proved a characterization result on the existence of nonnegative and nonzero strong solutions for the Dirichlet problem
$$ \De u(x) = \la \al(x)f(u(x)) \text{~~for~~} x \in V\setminus V_0; u(x) = 0 \text{~~for~~} x \in V_0$$
where V stands for the Sierpi\'nski gasket in $(\R^{N-1}, |\cdot|), ~N \geq 2, ~V_0$ is its intrinsic
boundary (consisting of its $N$ corners) and $\De$ denotes the weak Laplacian. Under suitable conditions on $f$ they have shown the existence of solutions. In \cite{BC} Breckner and Chill have studied a problem with Laplace operator on the Sierpi\'nski gasket with nonlinear Robin
boundary conditions and have shown that for certain Robin boundary conditions the
Laplace operator generates a positive, order preserving, L${^\infty}$-contractive semigroup
which is sandwiched (in the sense of domination) between the semigroups generated
by the Dirichlet-Laplace operator and the Neumann-Laplace operator. For p-Laplacian operator Strichartz and Wong \cite{sw} analyzed the problem $\De_p u = f$ with prescribed boundary values by solving an equivalent minimization problem and also gave numerical solution to this problem.
Priyadarshi and Sahu in \cite{PS} studied the problem
\begin{equation*}
 \quad -\De_p u = \la a(x)|u|^{q-1} u + b(x)|u|^{l-1}u \; \text{in}\; \s\setminus\s_0; \quad u = 0 \; \mbox{on}\; \s_0,
\end{equation*}
and in \cite{SP} studied the problem 
\begin{equation*}
 \quad -\Delta_p u = \lambda a(x)|u|^{p-2} u + b(x)|u|^{\ell-1}u \; \text{in}\; \mathcal{S}\setminus\mathcal{S}_0; \quad u = 0 \; \mbox{on}\; \mathcal{S}_0,
\end{equation*}
where $\s $ is the Sierpi\'nski gasket in $\R^2$, $\s_0$ is its boundary and $\la >0.$ Under suitable conditions, they have shown the existence of at least two nontrivial weak solutions and one weak solution, respectively to the above problems for a certain range of $\la.$ Many authors have studied different types of equations involving Laplacian on the Sierpi\'nski gasket, we cite a few of them which are related to this article \cite{BMR,falc1,FMR,BRS}.

Now we will discuss about Kirchhoff type equations on regular domains. Consider the following problem 
\begin{equation}\label{0.1}
-M\left(\int_{\Omega} |\nabla u|^2 {dx}\right) \De u =\la f(x,u) \text{~in~} \Omega ; u(x) = 0 \text{~on~} \pa\Omega 
\end{equation}
where $\Omega$ is a smooth bounded domain in $\R^N$ and $\pa\Omega$ is its boundary.
In \cite{ACM} the authors have studied Eq. \eqref{0.1} where they have assumed $M$ is a positive function, $f$ has subcritical growth and $\la=1.$ They have shown the existence of positive solutions to this above class of functions.  Perera and Zhang studied the problem \eqref{0.1} for $M(t) = at +b$ where $a,b>0$ and $\la = 1.$ In \cite{PZ} they have shown the existence of solutions using the Yang index and critical groups with the assumptions that $f$ is a Carath\'eodory function satisfying some growth conditions in second variable. In \cite{ZP} they have studied the same problem using variational methods and invariant sets of descent flow and have obtained a sign-changing solutions assuming the hypothesis that $f(x,t)$ is locally Lipschitz continuous in $t \in \R,$ uniformly in $x \in \overline{\Omega}$ and subcritical. He and Zou \cite{HZ} obtained infinitely many almost everywhere positive weak solutions to a class of Kirchhoff-type problem \eqref{0.1} under the assumptions that $M(t)= at+b$ and $f(x,t)$ is a Carath\'eodory function satisfying some conditions. In \cite{LZW} Liao et al. studied the following problem 
\begin{align*}
-\left(a+ b \int_{\Omega}|\nabla u|^2dx\right) \De u  &= \nu u^3 + Q(x)u^q \text{~~in~~} \Omega \\
u &= 0 \text{~~on~~} \pa\Omega 
\end{align*}
where $\Omega \subset \R^3$ is a bounded domain, $a,b \geq 0$ and $a + b > 0, \nu > 0, 3<q\leq 5$ and $Q(x)>0$ are four parameters. They have used mountain pass lemma to show the existence of positive
solutions. Many other researchers have studied Kirchhoff type equations in \cite{MZ,ST,cheng,CKW,LLS,fig}. To the best of our knowledge there is no literature for Kirchhoff type equations on the Sierpi\'nski gasket. This motivated us to study the Kirchhoff type equations on the Sierpi\'nski gasket.
 
The outline of our paper is as follows. In section 2 we discuss about the weak $p$-Laplacian on the Sierpi\'nski gasket and also describe how we are going from the energy functional $\E_p(u)$ to the energy form $\E_p(u,v)$. We recall some important results and state our main theorem. In section 3 we define the Euler functional associated to our problem \eqref{prob} and study some of its properties. In section 4 we do the analysis of the fibering map $\phi_u$ and also find the range of $\la>0$ for which problem \eqref{prob} has two nontrivial solutions. Finally, in section 5 we give the detailed proof of our theorem stated in section 2.
\section{Preliminaries and Main results}
We will start by introducing the Sierpi\'nski gasket. Let $\s_0 = \{q_1, q_2, q_3\}$ be three points on $\R^2$ equidistant from each other. Let $F_i(x) = \frac{1}{2}(x-q_i) + q_i$ for $i= 1,2,3$ and $F(A) = \cup_{i=1}^3 F_i(A)$ for $A \subseteq \R^2$.  It is well known that $F$ has a unique fixed point $\s$ (see, for instance, \cite[Theorem 9.1]{KF}), which is called the Sierpi\'nski gasket. Another way to view the same is $\s = \oline{\cup_{j \geq 0}F^{j}(\s_0)},$ where $F^j$ means $F$ composed with itself $j$ times. We know that $\s$ is a compact set in $\R^2$ and we will use certain properties of functions on $\s$ due to the compactness of the domain. It is well known that the Hausdorff dimension of $\s$ is $\frac{\ln 3}{\ln 2}$ and the $\frac{\ln 3}{\ln 2}$-dimensional Hausdorff measure is finite and nonzero (i.e., $0<\hd^{\frac{\ln 3}{\ln 2}}(\s)<\infty)$ (see, \cite[Theorem 9.3]{KF}). Throughout this paper, we will use this measure and denote it by $\mu$. If $f$ is a measurable function on $\s$, then
$$\|f\|_\infty \coloneqq \inf \{a \in \R : \mu\{x \in \s : |f(x)| > a\} = 0\}.$$
We define the $p$-energy  with the help of a three variable function $A_p$ which is convex, homogeneous of degree $p$ and is invariant under addition of constant and permutation of indices. The $m^{\text{th}}$ level Sierpi\'nski gasket is $\s^{(m)} = \cup_{j=0}^m F^j(\s_0)$. We construct the $m^{\text{th}}$ level crude energy as $$E_p^{(m)}(u) = \sum_{|\omega| = m} A_p\left(u(F_\omega q_1), u(F_\omega q_2), u(F_\omega q_3)\right)$$ and the $m^{\text{th}}$ level renormalized $p$-energy is given by $$\E_p^{(m)}(u) = (r_p)^{-m} E_p^{(m)}(u),$$ where $r_p$ is the unique (with respect to p), independent of $A_p$, renormalizing factor and $0 < r_p <1$. For more detail see \cite{hps}. Now we can observe that $\E_p^{(m)}(u)$ is a monotonically increasing function of $m$ because of renormalization. So we define the $p$-energy function as
$$\E_p(u) = \lim\limits_{m \to \infty} \E_p^{(m)}(u), $$ which exists for all $u$ as an extended real number. Now we define $\dom(\E_p)$ as the space of continuous functions $u$ satisfying $\E_p(u) < \infty.$ In \cite{hps}, it is shown that $\dom(\E_p)$ modulo constant functions forms a Banach space endowed with the norm $\|\cdot\|_{\E_p}$ defined as $$\|u\|_{\E_p} = \E_p(u)^{1/p}.$$ Now we proceed to define the energy form from the energy function as
\begin{equation}\label{eq-6}
  \E_p(u,v) \coloneqq \frac{1}{p}~\left.\frac{\mathrm d}{\mathrm d t} \E_p(u+tv)\right|_{t=0}.
\end{equation}
Note that we do not know whether $\E_p(u+tv)$ is differentiable or not but we know by the convexity of $A_p$ that $\E_p(u)$ is a convex function. So, we interpret the equation \eqref{eq-6} as an interval-valued equation. That is, $$\E_p(u,v) = [\E^-_p(u,v), \E^+_p(u,v)]$$ is a nonempty compact interval and the end points are the one sided derivatives. Also, it satisfies the following properties
\begin{enumerate}[(i)]
  \item $\E_p(u,av) = a~\E_p(u,v)$
  \item $\E_p(u,v_1 + v_2) \subseteq \E_p(u,v_1) + \E_p(u,v_2)$
  \item $\E_p(u,u) = \E_p(u)$
\end{enumerate}

We recall some results which will be required to prove our results.
\begin{lemma}\cite[Lemma 3.2]{sw}\label{lem-2}
There exists a constant $K_p$ such that for all $u \in \dom(\E_p)$ we have
\begin{equation*}
|u(x) - u(y)| \leq K_p \E_p(u)^{1/p}(r_p^{1/p})^m
\end{equation*}
whenever $x$ and $y$ belong to the same or adjacent cells of order $m$.
\end{lemma}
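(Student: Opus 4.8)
The plan is to bootstrap from a single-cell comparison estimate up to arbitrary points by means of a telescoping geometric series. The starting point is the structure of $A_p$. Since $A_p$ is invariant under addition of a constant, it is really a function of the differences of its arguments, so I may normalize $a_3=0$. On the compact set $\{(a_1,a_2,0): \max_{i,j}|a_i-a_j|=1\}$ the function $A_p$ is continuous and (being a genuine energy kernel) strictly positive away from the diagonal where all arguments agree; hence it attains a positive minimum $c>0$ there. Homogeneity of degree $p$ then upgrades this to the comparison
$$
c\,\Big(\max_{i,j}|a_i-a_j|\Big)^p \le A_p(a_1,a_2,a_3)
$$
for all triples. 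I expect this comparison to be the conceptual heart of the proof, since everything afterward is a quantitative consequence of it together with the monotonicity of $\E_p^{(m)}$.

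Next I pass from energies to vertex differences. For any word $\omega$ with $|\omega|=m$, the single term $A_p\big(u(F_\omega q_1),u(F_\omega q_2),u(F_\omega q_3)\big)$ is dominated by the full crude energy $E_p^{(m)}(u)=(r_p)^m\,\E_p^{(m)}(u)$, and since $\E_p^{(m)}(u)$ increases to $\E_p(u)$ this is at most $(r_p)^m\,\E_p(u)$. Feeding this into the comparison above yields the vertex-to-vertex estimate
$$
|u(F_\omega q_i)-u(F_\omega q_j)|\le c^{-1/p}\,\E_p(u)^{1/p}\,(r_p^{1/p})^m
$$
for the three vertices of any cell of order $m$.

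To reach arbitrary points I fix $x$ and let $C_m\supset C_{m+1}\supset\cdots$ be the decreasing chain of cells containing $x$, with $C_k$ of order $k$; consecutive cells $C_k,C_{k+1}$ share a vertex $s_k$, which is in particular a vertex of $C_{k+1}$. Since $\mathrm{diam}(C_k)\to 0$ and $x\in C_k$ for all $k$, continuity of $u$ gives $u(s_k)\to u(x)$. Because $s_k$ and $s_{k+1}$ are both vertices of $C_{k+1}$, the vertex estimate applied to $C_{k+1}$ bounds $|u(s_k)-u(s_{k+1})|$ by $c^{-1/p}\,\E_p(u)^{1/p}\,(r_p^{1/p})^{k+1}$, and telescoping gives
$$
|u(x)-u(s_m)|\le \sum_{k=m}^{\infty}|u(s_{k+1})-u(s_k)|\le c^{-1/p}\,\E_p(u)^{1/p}\sum_{k=m}^{\infty}(r_p^{1/p})^{k+1}.
$$
Since $0<r_p<1$ forces $0<r_p^{1/p}<1$, the geometric series converges, so $|u(x)-u(s_m)|\le C\,\E_p(u)^{1/p}\,(r_p^{1/p})^m$ for a fixed constant $C$.

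Finally I assemble the claim by the triangle inequality. Combining the telescoped bound with the vertex estimate shows $|u(x)-u(v)|\le K\,\E_p(u)^{1/p}\,(r_p^{1/p})^m$ for \emph{every} vertex $v$ of the order-$m$ cell containing $x$. If $x$ and $y$ lie in a common cell of order $m$ I route through any vertex of that cell; if they lie in adjacent cells I route through their shared vertex. In either case two applications of this bound give $|u(x)-u(y)|\le 2K\,\E_p(u)^{1/p}\,(r_p^{1/p})^m$, and taking $K_p=2K$ finishes the proof. The principal obstacle is the comparison estimate of the first paragraph; the only other delicate point is the adjacency geometry, namely verifying that consecutive subcells share a vertex and that the chosen vertices converge to $x$.
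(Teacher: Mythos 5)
The paper does not prove this lemma at all: it is imported verbatim from Strichartz--Wong \cite[Lemma 3.2]{sw}, so there is no internal argument to compare against. Your reconstruction is correct and is, in substance, the cited proof: the comparison $c\,\bigl(\max_{i,j}|a_i-a_j|\bigr)^p\le A_p(a_1,a_2,a_3)$ from homogeneity plus compactness, the bound of a single cell's $A_p$-term by $(r_p)^m\E_p(u)$ via monotonicity of the renormalized energies, and the chaining/telescoping through the nested cells containing $x$. The one point where you lean on something not stated in this paper is the strict positivity of $A_p$ away from constant triples: the preliminaries here list only convexity, $p$-homogeneity, and invariance under addition of constants and permutations, so the positive minimum $c$ on your compact slice does not follow from what is written, and your parenthetical ``being a genuine energy kernel'' is carrying real weight. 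It is, however, among the standing hypotheses on $A_p$ in \cite{hps} (nondegeneracy is needed in any case for $\|\cdot\|_{\E_p}$ to be a norm), so you should simply promote it to an explicit assumption. The adjacency geometry you flag at the end is fine: each child cell $F_{\omega i}(\s)$ shares the vertex $F_\omega(q_i)$ with its parent $F_\omega(\s)$, so consecutive cells in your chain do share a vertex, and since the diameters shrink geometrically those vertices converge to $x$ and continuity of $u\in\dom(\E_p)$ justifies the telescoping.
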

Let $\dom_0(\E_p)$ be the subspace of $\dom(\E_p)$ containing all functions which vanish at the boundary.
\begin{lemma}\label{lem-5}
If $u \in \dom_0(\E_p)$, then there exists a real positive constant $K$ such that $\|u\|_\infty \leq K \|u\|_{\E_p}.$
\end{lemma}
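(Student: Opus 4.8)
The plan is to reduce the whole statement to the H\"older-type estimate of Lemma \ref{lem-2} applied at the coarsest possible scale. First I would record that, by the definition of $\dom_0(\E_p)$, any $u$ in this space vanishes at each of the three boundary points $q_1, q_2, q_3$. Since $u \in \dom(\E_p)$ is continuous on the compact set $\s$, the essential supremum $\|u\|_\infty$ coincides with the ordinary supremum $\sup_{x \in \s}|u(x)|$, so it suffices to produce a pointwise bound on $|u(x)|$.

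The key observation is that the whole gasket $\s$ is itself the single cell of order $0$, and its three vertices are exactly $q_1, q_2, q_3$. Consequently, for every $x \in \s$ the two points $x$ and $q_1$ lie in the same cell of order $m=0$. Applying Lemma \ref{lem-2} with $m=0$ then yields $|u(x) - u(q_1)| \le K_p\, \E_p(u)^{1/p}\,(r_p^{1/p})^0 = K_p\, \E_p(u)^{1/p}$. Because $u(q_1) = 0$, the left-hand side is just $|u(x)|$, so $|u(x)| \le K_p\, \E_p(u)^{1/p} = K_p\, \|u\|_{\E_p}$ holds for every $x \in \s$.

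Taking the supremum over $x \in \s$ and using the identification of $\|u\|_\infty$ with that supremum, I would conclude $\|u\|_\infty \le K_p\, \|u\|_{\E_p}$, so the lemma holds with $K = K_p$, the very constant furnished by Lemma \ref{lem-2}.

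There is essentially no hard analytic step in this argument; the two points that require care are structural. The first is that restricting to $\dom_0(\E_p)$ makes $u(q_1)=0$ available, which is precisely what converts the difference estimate of Lemma \ref{lem-2} into an absolute bound on $|u(x)|$ (and which, incidentally, makes $\|\cdot\|_{\E_p}$ a genuine norm on $\dom_0(\E_p)$ rather than merely a norm modulo constants, since the only constant vanishing on $\s_0$ is $0$). The second is the recognition that $\s$ is a single cell of order $0$, so that Lemma \ref{lem-2} may be invoked at scale $m=0$ with no chaining across adjacent cells. If one preferred not to use the order-$0$ cell directly, the alternative would be to fix $x$, select for each $m$ a cell of order $m$ containing $x$, and telescope along a nested sequence of adjacent cells down to a boundary vertex; but this longer route is unnecessary once the order-$0$ cell is used.
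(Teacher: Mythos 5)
Your proof is correct, but it reaches the conclusion by a different route than the paper. The paper's own proof is exactly the ``longer route'' you mention at the end and then discard: it connects a point of the dense vertex set $\cup_{j\geq 0}F^{j}(\s_0)$ to a boundary point by a string of points, applies Lemma \ref{lem-2} at each successive scale, and sums the resulting geometric series $\sum_m (r_p^{1/p})^m$, which is where the hypothesis $0<r_p<1$ enters; continuity then extends the bound from the vertices to all of $\s$. Your argument instead invokes Lemma \ref{lem-2} once, at order $m=0$, observing that $\s$ is the unique cell of order $0$ containing both $x$ and the boundary vertex $q_1$ where $u$ vanishes; this yields the bound for every $x\in\s$ in one step, with the explicit constant $K=K_p$ and no chaining, no geometric series, and no density argument. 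The trade-off is that your version leans on Lemma \ref{lem-2} being valid at $m=0$ (a legitimate reading of its statement as given, since the levels start at $m=0$), whereas the paper's chaining argument only uses the estimate across the scales appearing in the string and re-derives the coarse-scale bound from them. Your observation that the essential supremum agrees with the supremum for a continuous function (since $\mu$ has full support) is a point the paper passes over silently, and it is worth making explicit as you do.
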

\begin{proof}
We can connect a point on $\cup_{j \geq 0}F^{j}(\s_0)$ and a boundary point by a string of points. As boundary values are zero, using triangle inequality, Lemma \ref{lem-2} and the fact that $0<r_p<1$, we get the result.
\end{proof}
Now we define a weak solution for the problem \eqref{prob}.
\begin{definition}
   We say $u \in\dom_0(\E_p)$ is a weak solution to the problem \eqref{prob} if it satisfies $$\lambda \int\limits_{\s} f(x)|u|^{q-2}uv \dm + \int\limits_{\s} g(x)|u|^{l-2}u v \dm \in M(\|u\|_{\E_p}^p)\E_p(u,v)$$ for all
  $v \in \dom_0(\E_p).$
\end{definition}
Our main results include:
\begin{theorem}\label{main}
There exists a $\la_1 >0$ such that problem \eqref{prob} has at least two nontrivial weak solutions whenever $0 < \la <\la_1.$
\end{theorem}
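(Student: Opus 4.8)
The plan is to recast \eqref{prob} variationally and attack it with the fibering-map/Nehari-manifold method, which is the natural tool for the concave--convex structure $1<q<p<l$. First I would introduce the Euler functional on $\dom_0(\E_p)$,
\[
 I_\la(u) = \frac{1}{p}\,\widehat{M}\big(\|u\|_{\E_p}^p\big) - \frac{\la}{q}\int_{\s} f(x)|u|^q\,\dm - \frac{1}{l}\int_{\s} g(x)|u|^l\,\dm,
\]
where $\widehat{M}(t)=\int_0^t M(s)\,ds = \frac{a}{k+1}t^{k+1}+bt$. Using the definition \eqref{eq-6} of $\E_p(u,v)$ together with properties (i)--(iii), one checks that the Gateaux derivative of $I_\la$ along $v$ equals $M(\|u\|_{\E_p}^p)\E_p(u,v) - \la\int f|u|^{q-2}uv\,\dm - \int g|u|^{l-2}uv\,\dm$; hence $u$ is a weak solution of \eqref{prob} exactly when $0$ belongs to this (interval-valued) expression, i.e. when $u$ is a critical point of $I_\la$. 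The lower-order integrals are well defined and continuous because Lemma \ref{lem-5} embeds $\dom_0(\E_p)$ into $L^\infty(\s)$ and $\mu$ is finite.

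Next I would carry out the fibering analysis of $\phi_u(t)=I_\la(tu)$ for $t>0$. Writing $A=\|u\|_{\E_p}^p$, $B=\int f|u|^q\,\dm$, $C=\int g|u|^l\,\dm$, one gets $\phi_u'(t)=aA^{k+1}t^{p(k+1)-1}+bA\,t^{p-1}-\la B\,t^{q-1}-C\,t^{l-1}$, so that $u\in\n_\la$ (the Nehari manifold, $\phi_u'(1)=0$) iff $aA^{k+1}+bA=\la B + C$. Splitting $\n_\la=\n_\la^+\cup\n_\la^0\cup\n_\la^-$ according to the sign of $\phi_u''(1)$, I would show that, for $u$ with $B>0$, the map $\phi_u$ starts at $0$, dips below zero (the sublinear $q$-term dominating near $t=0$), rises, and -- in the relevant regime $l>p(k+1)$, where the superlinear term dominates the Kirchhoff growth for large $t$ -- tends to $-\infty$; this yields exactly one local minimiser and one local maximiser, i.e. nonempty $\n_\la^+$ and $\n_\la^-$. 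The threshold $\la_1$ is produced by requiring these two critical values to stay separated, equivalently $\n_\la^0=\emptyset$ for $0<\la<\la_1$; this makes $\n_\la$ a $C^1$ manifold on which a constrained minimiser is an unconstrained critical point of $I_\la$ by the Lagrange multiplier rule.

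I would then minimise $I_\la$ separately over $\n_\la^+$ and $\n_\la^-$. Substituting the Nehari constraint $C=aA^{k+1}+bA-\la B$ shows that on $\n_\la$ the functional reduces to $a(\frac{1}{p(k+1)}-\frac{1}{l})A^{k+1}+b(\frac{1}{p}-\frac{1}{l})A-\la(\frac{1}{q}-\frac{1}{l})B$, whose leading coefficient is positive since $l>p(k+1)>p>q$; hence $I_\la$ is bounded below and coercive on $\n_\la$, and a minimising sequence is $\E_p$-bounded. By Lemma \ref{lem-2} such a sequence is uniformly equicontinuous and by Lemma \ref{lem-5} uniformly bounded, so Arzel\`a--Ascoli gives a uniformly convergent subsequence; in fact this shows the embedding $\dom_0(\E_p)\hookrightarrow C(\s)$ is compact. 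Uniform convergence lets me pass to the limit in the $f$- and $g$-integrals, while weak lower semicontinuity of the convex energy $\E_p$ controls the leading term, so each infimum is attained at some $u^\pm$. Finally I would verify $u^\pm\ne 0$ and that they stay in $\n_\la^\pm$ rather than slipping into $\n_\la^0$ (using $\n_\la^0=\emptyset$), so both are genuine critical points of $I_\la$; since $\n_\la^+\cap\n_\la^-=\emptyset$ they are distinct, giving the two nontrivial weak solutions.

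The main obstacle, I expect, is twofold. The analytic subtlety is that $\E_p(u,v)$ is only interval-valued (the $p$-Laplacian on $\s$ is neither linear nor almost-everywhere single-valued), so differentiating $I_\la$, writing the Euler--Lagrange relation as an \emph{inclusion}, and then extracting an honest weak solution from a Nehari minimiser all require one-sided derivatives and the convexity/homogeneity properties (i)--(iii) rather than a genuine gradient. The structural subtlety is pinning down $\la_1$: the Kirchhoff factor replaces the usual single power $\|u\|_{\E_p}^p$ by $aA^{k+1}+bA$, so the fibering equation becomes a four-term balance rather than the classical three-term one, and proving that $\n_\la^0$ collapses to the empty set uniformly over the constraint for all small $\la$ is the delicate estimate on which the entire two-solution count rests.
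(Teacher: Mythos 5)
Your proposal follows essentially the same route as the paper: the same Euler functional $I_{\la,M}$ with $\hat M$, the same Nehari decomposition $\M_{\la,M}^{\pm}(\s)$, $\M_{\la,M}^{0}(\s)$ driven by the fibering maps $\phi_u$, the same threshold $\la_1$ obtained by forcing $\M_{\la,M}^{0}(\s)=\emptyset$ for small $\la$, and the same compactness via Lemma \ref{lem-2}, Lemma \ref{lem-5} and Arzel\`a--Ascoli. Two steps in your sketch need more care than you give them. First, the uniform limit $u_0$ of a minimizing sequence is not shown to lie on the Nehari manifold merely by lower semicontinuity of $\E_p$: the paper must exclude the strict inequality $\E_p(u_0)<\limsup_n\E_p(u_n)$ by rescaling $u_0$ to $t_0u_0\in\M_{\la,M}^{+}(\s)$ and deriving a contradiction with the infimum (and, on $\M_{\la,M}^{-}(\s)$, by using the positive lower bound $\de_1$ of Lemma \ref{lem-7} to keep $\int_{\s}g(x)|v_0|^{l}\dm>0$); this is precisely how one sees that the limit does not degenerate to $0$ or slip off the constraint set. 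Second, the Lagrange multiplier rule you invoke does not literally apply here, since $I_{\la,M}$ is not $C^1$ on $\dom_0(\E_p)$ ($\E_p(u,\cdot)$ is only interval-valued); the paper replaces it by an implicit-function-theorem perturbation (Lemma \ref{lem-3}, producing $\bar{t_\e}(u_0+\e\psi)\in\M_{\la,M}^{+}(\s)$ with $\bar{t_\e}\to1$) followed by one-sided difference quotients of $I_{\la,M}$, which sandwich the quantity $\la\int_{\s}f|u_0|^{q-2}u_0\psi\dm+\int_{\s}g|u_0|^{l-2}u_0\psi\dm$ between $M(\|u_0\|_{\E_p}^p)\E_p^{-}(u_0,\psi)$ and $M(\|u_0\|_{\E_p}^p)\E_p^{+}(u_0,\psi)$, yielding the weak-solution inclusion. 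You flag this second obstacle yourself, so the architecture is right; these are the two places where the plan must be firmed up to become a proof.
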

\section{The Euler functional and its analysis}
Let $\la >0$, $p > 1$ and $1 <q<p<l.$ The Euler functional associated with the problem \eqref{prob} for $u \in \dom_0(\E_p)$ is defined as
$$ I_{\la,M}(u) = \frac{1}{p}\hat{M}(\|u\|_{\E_p}^p) - \frac{\la}{q} \int\limits_{\s} f(x)|u|^{q} \dm - \frac{1}{l} \int\limits_{\s} g(x)|u|^{l} \dm.$$
where $\hat{M}(s) = \int_{0}^{s}(a t^k +b)dt = \frac{1}{k+1} a s^{k+1} + b s.$
So, we write $I_{\la,M}$ in explicit term as
\begin{equation}\label{ILA}
I_{\la,M}(u) = \frac{1}{p}\left(\frac{a }{k+1}\|u\|_{\E_p}^{p(k+1)} + b \|u\|_{\E_p}^p\right) - \frac{\la}{q} \int\limits_{\s} f(x)|u|^{q} \dm - \frac{1}{l} \int\limits_{\s} g(x)|u|^{l} \dm
\end{equation}
We do not know the range of $I_{\la,M}$ on $\dom_0(\E_p).$ So, we will consider a set where it is bounded below and do our analysis. Consider the set
\begin{align*}
 \M_{\la,M}(\s) &= \left\{ u \in \dom_0(\E_p)\setminus\{0\} : \lambda \int\limits_{\s} f(x)|u|^{q} \dm + \int\limits_{\s} g(x)|u|^{l} \dm \in M(\|u\|_{\E_p}^p)\E_p(u,u)\right\}   \\
        &= \left\{u \in \dom_0(\E_p)\setminus\{0\} : \lambda \int\limits_{\s} f(x)|u|^{q} \dm + \int\limits_{\s} g(x)|u|^{l} \dm = M(\|u\|_{\E_p}^p)\|u\|_{\E_p}^p\right\}.
\end{align*}
This means $u \in \M_{\la,M}(\s)$ if and only if
\begin{equation}\label{eq-1}
  M(\|u\|_{\E_p}^p)\|u\|_{\E_p}^p - \lambda \int\limits_{\s} f(x)|u|^{q} \dm - \int\limits_{\s} g(x)|u|^{l} \dm = 0.
\end{equation}
Using equation \eqref{eq-1}, we get the following as a consequence on $\M_{\la,M}(\s)$
\begin{equation}\label{eq-2}
\begin{split}
I_{\la,M}(u) &= \frac{1}{p}\left(\frac{a }{k+1}\|u\|_{\E_p}^{p(k+1)} + b \|u\|_{\E_p}^p\right) - \frac{\la}{q} \int\limits_{\s} f(x)|u|^{q} \dm - \frac{1}{l} \left(M(\|u\|_{\E_p}^p)\|u\|_{\E_p}^p - \lambda \int\limits_{\s} f(x)|u|^{q} \dm\right)\\
            &=\left(\frac{1}{p(k+1)}-\frac{1}{l}\right)a\|u\|_{\E_p}^{p(k+1)} + \left(\frac{1}{p}-\frac{1}{l}\right) b\|u\|_{\E_p}^p - \left(\frac{1}{q}-\frac{1}{l}\right) \la \int\limits_{\s} f(x)|u|^{q} \dm.
\end{split}
\end{equation}
At the same time we get the following equation as well
\begin{equation}\label{eq-3}
\begin{split}
I_{\la,M}(u) &= \frac{1}{p}\left(\frac{a }{k+1}\|u\|_{\E_p}^{p(k+1)} + b \|u\|_{\E_p}^p\right) - \frac{1}{q} \left(M(\|u\|_{\E_p}^p)\|u\|_{\E_p}^p- \int\limits_{\s} g(x)|u|^{l} \dm\right) - \frac{1}{l} \int\limits_{\s} g(x)|u|^{l} \dm\\
             &=\left(\frac{1}{p(k+1)}-\frac{1}{q}\right)a\|u\|_{\E_p}^{p(k+1)} + \left(\frac{1}{p}-\frac{1}{q}\right) b\|u\|_{\E_p}^p + \left(\frac{1}{q}-\frac{1}{l}\right) \int\limits_{\s} g(x)|u|^{l} \dm.
\end{split}
\end{equation}
\begin{definition}
Let $\B$ be a Banach space. A functional $f : \B \to \R$ is said to be coercive if $f(x) \to \infty$ as $\|x\| \to \infty.$
\end{definition}
\begin{theorem}
$I_{\la,M}$ is coercive and bounded from below on $\M_{\la,M}(\s).$
\end{theorem}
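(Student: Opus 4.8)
The plan is to work throughout with the representation \eqref{eq-2}, which holds for every $u \in \M_{\la,M}(\s)$, rather than with the original expression \eqref{ILA}. The whole point of passing to the manifold is that the highest-order source term $\int_\s g(x)|u|^l\dm$ has been eliminated via the constraint \eqref{eq-1}, leaving only a term of low exponent $q$ to compete with the positive ``principal'' part. Using (and this is the structurally important hypothesis) that the exponents satisfy $q<p<p(k+1)<l$, the three coefficients in \eqref{eq-2} have determined signs: $\frac{1}{p(k+1)}-\frac1l>0$ and $\frac1p-\frac1l>0$, while $\frac1q-\frac1l>0$, so that the last contribution enters with a minus sign. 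Thus the two leading terms are positive and the only negative term carries the low exponent $q$.

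Next I would control the remaining integral. By Lemma \ref{lem-5} we have $\|u\|_\infty \le K\|u\|_{\E_p}$, and since $\s$ is compact with $\mu(\s)<\infty$ and $f$ is bounded,
\[
\left|\int_\s f(x)|u|^{q}\dm\right| \le \|f\|_\infty\,\mu(\s)\,\|u\|_\infty^{q} \le \|f\|_\infty\,\mu(\s)\,K^{q}\,\|u\|_{\E_p}^{q}.
\]
Substituting this bound into \eqref{eq-2} gives, for all $u \in \M_{\la,M}(\s)$,
\[
I_{\la,M}(u) \ge c_1\|u\|_{\E_p}^{p(k+1)} + c_2\|u\|_{\E_p}^{p} - c_3\,\la\,\|u\|_{\E_p}^{q},
\]
where $c_1=\left(\frac{1}{p(k+1)}-\frac1l\right)a>0$, $c_2=\left(\frac1p-\frac1l\right)b>0$, and $c_3=\left(\frac1q-\frac1l\right)\|f\|_\infty\,\mu(\s)\,K^{q}\ge 0$.

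Finally, I would set $\Phi(s)=c_1 s^{p(k+1)}+c_2 s^{p}-c_3\la s^{q}$ for $s\ge 0$. Since $0<q<p<p(k+1)$ and $c_1,c_2>0$, the function $\Phi$ is continuous with $\Phi(0)=0$ and $\Phi(s)\to+\infty$ as $s\to\infty$, so it attains a finite global minimum $m$ on $[0,\infty)$. Because $I_{\la,M}(u)\ge\Phi(\|u\|_{\E_p})\ge m$, the functional is bounded below on $\M_{\la,M}(\s)$, and because $\Phi(\|u\|_{\E_p})\to\infty$ as $\|u\|_{\E_p}\to\infty$, it is coercive there. The one place where the structure of the problem is genuinely used — and hence the main point to get right — is the sign of the leading coefficient $c_1$: it is positive precisely because $p(k+1)<l$, and this is exactly what lets the manifold restriction tame a functional that is itself unbounded below on all of $\dom_0(\E_p)$ (evaluate $I_{\la,M}(tu)$ as $t\to\infty$ for a $u$ with $\int_\s g(x)|u|^l\dm>0$).
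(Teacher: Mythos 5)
Your proof is correct and follows essentially the same route as the paper's: both start from the identity \eqref{eq-2} valid on $\M_{\la,M}(\s)$, bound the remaining $f$-integral via Lemma \ref{lem-5} and the compactness of $\s$, and conclude from the exponent ordering $q<p<p(k+1)$ that the resulting one-variable lower bound is coercive and bounded below. Your write-up is in fact slightly more careful (you make the final elementary step about $\Phi(s)=c_1 s^{p(k+1)}+c_2 s^p-c_3\la s^q$ explicit, and you use $\|f\|_\infty\mu(\s)$ where the paper's $\|f\|_1\mu(\s)$ is a small infelicity), but there is no substantive difference in approach.
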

\begin{proof}
  From \eqref{eq-2}, $0<q<p,k+1<p(k+1)<l$, continuity of $u$, boundedness of $f$ and Lemma \ref{lem-5} we get
  \begin{align*}
 I_{\la,M}(u) &=\left(\frac{1}{p(k+1)}-\frac{1}{l}\right)a\|u\|_{\E_p}^{p(k+1)} + \left(\frac{1}{p}-\frac{1}{l}\right) b\|u\|_{\E_p}^p - \left(\frac{1}{q}-\frac{1}{l}\right) \la \int\limits_{\s} f(x)|u|^{q} \dm\\
         &\geq a\|u\|_{\E_p}^{p(k+1)}\left(\frac{1}{p(k+1)} - \frac{1}{l}\right) + b \|u\|_{\E_p}^p \left(\frac{1}{p} -\frac{1}{l}\right) - \left(\frac{\la}{q} - \frac{\la}{l}\right)\|u\|_{\E_p}^{q} K^q \|f\|_1\mu(\s)
\end{align*}
Hence, we conclude that $I_{\la,M}$ is coercive and bounded from below on $\M_{\la,M}$.
\end{proof}
For any $u \in \dom_0(\E_p)$, now consider the map $\phi_u : (0,\infty) \to \R$ defined as $\phi_u(t) = I_{\la,M}(tu),$ that is,
$$\phi_u(t) = \frac{1}{p}\left(\frac{at^{p(k+1)}}{k+1} \|u\|_{\E_p}^{p(k+1)} + b t^p \|u\|_{\E_p}^p\right) - \frac{\la t^q}{q} \int\limits_{\s} f(x)|u|^{q} \dm - \frac{t^l}{l} \int\limits_{\s} g(x)|u|^{l} \dm.$$

As $\phi_u$ is a smooth function we can compute its derivatives at $t=1$ as below.
\begin{equation*}
\begin{split}
  \phi_u'(1) &= a \|u\|_{\E_p}^{p(k+1)} + b \|u\|_{\E_p}^p - \la \int\limits_{\s} f(x)|u|^{q} \dm - \int\limits_{\s} g(x)|u|^{l} \dm~~~~~~ \text{and}\\
   \phi_u''(1) &=(p(k+1)-1) a\|u\|_{\E_p}^{p(k+1)} + (p-1)b \|u\|_{\E_p}^p - \la (q-1)\int\limits_{\s} f(x)|u|^{q} \dm - (l-1)\int\limits_{\s} g(x)|u|^{l} \dm.
\end{split}
\end{equation*}
It is easy to observe that $u \in \M_{\la,M}(\s)$ if and only if $\phi_u'(1) = 0$ and more generally, we can say the following
\begin{lemma}\label{lem-1}
  $tu \in \M_{\la,M}(\s)$ if and only if $\phi'_{tu}(1) = 0$, that is, $\phi'_u(t) = 0$.
\end{lemma}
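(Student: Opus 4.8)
The plan is to prove the equivalence in Lemma~\ref{lem-1} by a direct computation relating the value of $\phi'_u$ at a scaled point $t$ to the membership condition defining $\M_{\la,M}(\s)$.  The key observation is that $\phi_u(t) = I_{\la,M}(tu)$ and $\phi_{tu}(s) = I_{\la,M}(stu)$, so that $\phi_{tu}(s) = \phi_u(st)$.  Differentiating both sides with respect to $s$ and evaluating at $s=1$ gives $\phi'_{tu}(1) = t\,\phi'_u(t)$, which immediately shows that $\phi'_{tu}(1) = 0$ if and only if $\phi'_u(t) = 0$ (since $t > 0$).  This handles the ``more generally'' part by reducing it to the already-noted fact that $w \in \M_{\la,M}(\s)$ iff $\phi'_w(1) = 0$, applied to $w = tu$.

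\medskip

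First I would recall the explicit formula for $\phi_u(t)$ stated just above the lemma and note that, for fixed $t>0$, the map $s \mapsto \phi_u(st)$ is exactly $\phi_{tu}(s)$: replacing $u$ by $tu$ multiplies $\|tu\|_{\E_p}^p = t^p\|u\|_{\E_p}^p$ and each integral $\int_{\s} f(x)|tu|^q\dm = t^q\int_{\s} f(x)|u|^q\dm$, etc., and these $t$-powers combine with the $s$-powers precisely into powers of $st$.  This is the cleanest route because it avoids recomputing $\phi'_{tu}(1)$ from scratch and instead uses the chain rule on the scaling identity $\phi_{tu}(s) = \phi_u(st)$.

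\medskip

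Alternatively, and equivalently, I would write out $\phi'_{tu}(1)$ directly by substituting $tu$ into the displayed formula for $\phi'_u(1)$:
\begin{equation*}
\phi'_{tu}(1) = a t^{p(k+1)}\|u\|_{\E_p}^{p(k+1)} + b t^p\|u\|_{\E_p}^p - \la t^q \int\limits_{\s} f(x)|u|^{q}\dm - t^l \int\limits_{\s} g(x)|u|^{l}\dm,
\end{equation*}
and compare this with $t\,\phi'_u(t)$ obtained by differentiating the explicit expression for $\phi_u(t)$ term by term.  A short check shows the two coincide, confirming $\phi'_{tu}(1) = t\,\phi'_u(t)$.  Since $t>0$, vanishing of one side is equivalent to vanishing of the other, and by the remark preceding the lemma $\phi'_{tu}(1)=0$ is exactly the statement $tu \in \M_{\la,M}(\s)$.

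\medskip

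\textbf{Main obstacle.}  There is no serious analytic difficulty here; the statement is essentially bookkeeping about how homogeneity of each term in $\phi_u$ interacts with scaling.  The only point requiring minor care is matching the powers correctly, especially the term $t^{p(k+1)}$ coming from the Kirchhoff factor $\hat{M}$, to verify the clean identity $\phi'_{tu}(1) = t\,\phi'_u(t)$ rather than some other constant multiple.  Once that algebraic identity is in hand, the equivalence follows immediately from $t>0$.
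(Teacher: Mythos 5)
Your proof is correct and matches the paper's (implicit) reasoning: the paper states this lemma without proof as an immediate consequence of the observation that $w \in \M_{\la,M}(\s)$ iff $\phi'_w(1)=0$, and your scaling identity $\phi_{tu}(s)=\phi_u(st)$, giving $\phi'_{tu}(1)=t\,\phi'_u(t)$ with $t>0$, is exactly the bookkeeping that justifies it. The power count, including the $t^{p(k+1)}$ term from $\hat{M}$, checks out, so nothing further is needed (beyond the standing convention, shared with the paper, that $u\neq 0$ so that $tu$ is eligible for membership in $\M_{\la,M}(\s)$).
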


To make our study easier, we will subdivide $\M_{\la,M}(\s)$ into sets corresponding to local minima, local maxima and points of inflection at $t=1.$ Hence we define the following sets.
\begin{align*}
  \M_{\la,M}^+(\s) &= \{u \in \M_{\la,M}(\s) : \phi_u''(1) > 0\}, \\
  \M_{\la,M}^0(\s) &= \{u \in \M_{\la,M}(\s) : \phi_u''(1) = 0\}  \\
  \text{and } \M_{\la,M}^-(\s) &= \{u \in \M_{\la,M}(\s) : \phi_u''(1) < 0\}.
\end{align*}

As $\phi_u'(1) = 0$, we get the following
\begin{align*}
  \phi_u''(1) &= (p(k+1)-l)a\|u\|_{\E_p}^{p(k+1)} + (p-l)b\|u\|_{\E_p}^p + \la(l-q)\int\limits_{\s} f(x)|u|^{q} \dm~ \text{and} \\
  \phi_u''(1) &= (p(k+1)-q)a\|u\|_{\E_p}^{p(k+1)} + (p-q)b\|u\|_{\E_p}^p + (q-l)\int\limits_{\s} g(x)|u|^{l} \dm.
\end{align*}
\section{Analysis of the map $\phi_u$}
In this section we will study the mapping $\phi_u$ with respect to our problem. The graphs of this function is determined by certain factors that includes $0<q<p,k+1<p(k+1)<l, \textbf{f} \coloneqq\int\limits_{\s} f(x)|u|^{q} \dm$ and $\textbf{g} \coloneqq\int\limits_{\s} g(x)|u|^{l}\dm.$\\
\uline{Case I} : Let $\int_{\s}f(x)|u|^{q}\dm \leq 0$ and $\int_{\s}g(x)|u|^{l}\dm \leq 0. $ Then we observe that $\phi_u(t)$ is an increasing function of $t$. So, no multiple of $u$ can be in $\M_{\la}(\s)$ using Lemma \ref{lem-1}.\\
\uline{Case II} : Let $\int_{\s}f(x)|u|^{q}\dm > 0$ and $\int_{\s}g(x)|u|^{l}\dm \leq 0.$ We observe that $\phi_u(t)$ decreases first and then increases. Also, $\phi_u(t) \to +\infty$ as $t \to +\infty.$ Thus $\phi_u$ has only one positive root. Thus the fibering map $\phi_u$ has a unique critical point at $t_1$ which is a local minimum. Hence, $t_1 u  \in \M_{\la,M}(\s),$ in particular, $\M_{\la,M}^{+}(\s).$  \\
\uline{Case III} : Let $\int_{\s}f(x)|u|^{q}\dm \leq 0$ and $\int_{\s}g(x)|u|^{l}\dm > 0.$ We observe that $\phi_u(t)$ increases initially and then decreases and goes to $- \infty$ as $t \to +\infty.$ Thus $\phi_u$ has only one positive root. Thus the fibering map $\phi_u$ has a unique critical point at $t_2$ which is a local maximum. Then $t_2 u  \in \M_{\la,M}(\s),$ in particular, $\M_{\la,M}^{-}(\s).$\\
\uline{Case IV} : Let $\int_{\s}f(x)|u|^{q}\dm > 0$ and  $\int_{\s}g(x)|u|^{l}\dm > 0.$ We observe that $\phi_u(t)$ decreases initially, then increases and then decreases after wards. Also, $\phi_u(t)\to -\infty$ as $t \to +\infty.$ For small enough value of $\la$ it has exactly two positive real root. Thus the fibering map $\phi_u$ has exactly two critical points at, let say, $t_3$  and $t_4.$ which is a local minimum and maximum respectively. Then $t_3 u, t_4u  \in \M_{\la,M}(\s),$ in particular, $t_3 u \in \M_{\la,M}^{+}(\s)$ and $t_4u \in \M_{\la,M}^{-}(\s).$\\
The following results gives us a range of $\la$ such that the set $\M_{\la,M}^0(\s)$ is an empty set.
\begin{lemma}\label{lem-6}
If $u \in \M_{\la,M}^{0},$ then $\la\int_{\s} f(x)|u|^q \dm >0$ and $\int_{\s} g(x)|u|^l \dm >0.$
\end{lemma}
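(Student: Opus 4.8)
The plan is to read off the conclusion directly from the two reduced formulas for $\phi_u''(1)$ displayed just before the statement, exploiting the strict ordering $0<q<p<p(k+1)<l$ of the exponents together with $a,b>0$. The entire argument is a sign analysis, and the one preliminary observation I would record is that the two energy terms $a\|u\|_{\E_p}^{p(k+1)}$ and $b\|u\|_{\E_p}^p$ are strictly positive: indeed $u\in\M_{\la,M}^0(\s)\subseteq\dom_0(\E_p)\setminus\{0\}$ forces $\|u\|_{\E_p}>0$, and $a,b>0$.

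First I would fix $u\in\M_{\la,M}^0(\s)$, so that $\phi_u''(1)=0$, and use the first of the two reduced identities,
$$0=(p(k+1)-l)a\|u\|_{\E_p}^{p(k+1)}+(p-l)b\|u\|_{\E_p}^p+\la(l-q)\int_{\s}f(x)|u|^{q}\dm.$$
Since $p(k+1)<l$ and $p<l$, the coefficients $p(k+1)-l$ and $p-l$ are both negative, so the first two summands are strictly negative. For the sum to vanish the remaining term must therefore be strictly positive; as $\la>0$ and $l-q>0$, this forces $\int_{\s}f(x)|u|^{q}\dm>0$, hence $\la\int_{\s}f(x)|u|^{q}\dm>0$.

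Next I would use the second reduced identity,
$$0=(p(k+1)-q)a\|u\|_{\E_p}^{p(k+1)}+(p-q)b\|u\|_{\E_p}^p+(q-l)\int_{\s}g(x)|u|^{l}\dm.$$
Here $p(k+1)-q>0$ and $p-q>0$, so the first two summands are strictly positive, while $q-l<0$. Thus the last term must be strictly negative to cancel them, and dividing through by the negative quantity $q-l$ yields $\int_{\s}g(x)|u|^{l}\dm>0$. Combining the two steps establishes both asserted inequalities.

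I do not expect a genuine obstacle here; the only points that require care are confirming the strict positivity of the two norm terms (which needs both $u\neq 0$ and $a,b>0$), and keeping track of which of the two $\phi_u''(1)$ identities isolates the $f$-integral and which isolates the $g$-integral, together with the correct sign of each exponent difference under $q<p<p(k+1)<l$.
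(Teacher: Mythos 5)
Your proof is correct and follows essentially the same route as the paper: the paper's own argument rederives those two reduced identities for $\phi_u''(1)$ from $\phi_u'(1)=0$ inside the proof and then solves for $\la\int_{\s}f(x)|u|^{q}\dm$ and $\int_{\s}g(x)|u|^{l}\dm$, which is exactly your sign analysis. Your explicit remark that $\|u\|_{\E_p}>0$ because $u\neq 0$ is a point the paper leaves implicit, but the substance is identical.
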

\begin{proof}
$u \in \M_{\la,M}^{0},$ then $\phi_{u}''(1)=0$
\begin{align*}
&\implies (p(k+1)-1) a\|u\|_{\E_p}^{p(k+1)} + (p-1)b \|u\|_{\E_p}^p - \la (q-1)\int\limits_{\s} f(x)|u|^{q} \dm - (l-1)\int\limits_{\s} g(x)|u|^{l} \dm =0\\
&\implies (p(k+1)-l) a\|u\|_{\E_p}^{p(k+1)} + (p-l)b \|u\|_{\E_p}^p - \la (q-l)\int\limits_{\s} f(x)|u|^{q} \dm =0\\
&\implies \la \int\limits_{\s} f(x)|u|^{q} \dm = \frac{1}{l-q}\left((l-p(k+1)) a\|u\|_{\E_p}^{p(k+1)} + (l-p)b \|u\|_{\E_p}^p \right) >0.
\end{align*}
Similarly,
\begin{align*}
&\implies (p(k+1)-1) a\|u\|_{\E_p}^{p(k+1)} + (p-1)b \|u\|_{\E_p}^p - \la (q-1)\int\limits_{\s} f(x)|u|^{q} \dm - (l-1)\int\limits_{\s} g(x)|u|^{l} \dm =0\\
&\implies (p(k+1)-1) a\|u\|_{\E_p}^{p(k+1)} + (p-1)b \|u\|_{\E_p}^p - (q-1)\left(a\|u\|_{\E_p}^{p(k+1)} + b \|u\|_{\E_p}^p-\int\limits_{\s} g(x)|u|^{l} \dm \right)\\
&\quad\quad - (l-1)\int\limits_{\s} g(x)|u|^{l} \dm =0\\
&\implies (p(k+1)-q) a\|u\|_{\E_p}^{p(k+1)} + (p-q)b \|u\|_{\E_p}^p - (l-q)\int\limits_{\s} g(x)|u|^{l} \dm  =0\\
&\implies \int\limits_{\s} g(x)|u|^{l} \dm =\frac{1}{l-q}\left((p(k+1)-q) a\|u\|_{\E_p}^{p(k+1)} + (p-q)b \|u\|_{\E_p}^p\right)>0.
\end{align*}
\end{proof}

\begin{lemma}\label{lem-4}
There exists a real number $\la_1 > 0$ such that if $0 < \la < \la_1,$ then $\M_{\la,M}^0(\s)$ is an empty set.
\end{lemma}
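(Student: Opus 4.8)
The plan is to argue by contraposition: I will show that membership $u \in \M_{\la,M}^0(\s)$ forces $\la$ to be bounded below by an explicit positive constant, so that no such $u$ can exist once $\la$ is small. The starting point is the pair of identities already isolated in the proof of Lemma \ref{lem-6}: for $u \in \M_{\la,M}^0(\s)$, writing $\tau := \|u\|_{\E_p} > 0$,
\begin{align*}
\la \int_{\s} f(x)|u|^{q}\dm &= \frac{1}{l-q}\left((l-p(k+1))a\tau^{p(k+1)} + (l-p)b\tau^p\right), \\
\int_{\s} g(x)|u|^{l}\dm &= \frac{1}{l-q}\left((p(k+1)-q)a\tau^{p(k+1)} + (p-q)b\tau^p\right).
\end{align*}
Under the standing ordering $q < p < p(k+1) < l$ every coefficient on the right-hand sides is strictly positive, which is what makes the argument run.

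First I would produce a lower bound on $\tau$ that is independent of $\la$. Discarding the (positive) $\tau^{p(k+1)}$ term in the second identity gives $\int_{\s} g(x)|u|^{l}\dm \ge \frac{(p-q)b}{l-q}\,\tau^p$. On the other hand, using the continuity of $u$ together with Lemma \ref{lem-5} (so that $\|u\|_\infty \le K\tau$) one has the upper estimate $\int_{\s} g(x)|u|^{l}\dm \le K^l\|g\|_1\,\tau^l$. Combining these and cancelling $\tau^p > 0$ yields $\tau^{\,l-p}\ge \frac{(p-q)b}{(l-q)K^l\|g\|_1}$; since $l > p$ this is a genuine positive lower bound $\tau \ge C_1 > 0$, with $C_1$ depending only on the fixed data $p,q,l,b,K$ and $\|g\|_1$.

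Next I would feed this into the first identity. Dropping the $\tau^{p(k+1)}$ term there gives $\la\int_{\s} f(x)|u|^{q}\dm \ge \frac{(l-p)b}{l-q}\,\tau^p$, while the same $L^\infty$-estimate bounds the left side above by $\la K^q\|f\|_1\,\tau^q$. Cancelling $\tau^q$ and inserting $\tau \ge C_1$ produces
$$\la \ge \frac{(l-p)b}{(l-q)K^q\|f\|_1}\,\tau^{\,p-q} \ge \frac{(l-p)b}{(l-q)K^q\|f\|_1}\,C_1^{\,p-q} =: \la_1 > 0.$$
Thus every $u \in \M_{\la,M}^0(\s)$ forces $\la \ge \la_1$, so for $0 < \la < \la_1$ the set $\M_{\la,M}^0(\s)$ must be empty, which is exactly the assertion.

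The only delicate points here are bookkeeping rather than genuine difficulty: one must check that all four coefficients $(l-p(k+1))$, $(l-p)$, $(p(k+1)-q)$, $(p-q)$ are positive (this is precisely where the ordering $q < p < p(k+1) < l$ enters) and keep the two distinct homogeneity exponents $p$ and $p(k+1)$ of the Kirchhoff term straight. The conceptual heart is that the superlinear ($l$) term pins $\tau$ away from $0$ uniformly in $\la$, while the sublinear ($q$) term, being weighted by $\la$, can only balance this growth when $\la$ is not too small; I expect isolating $C_1$ cleanly and then threading it through the second estimate to be where a little care is needed, but no individual step is analytically hard.
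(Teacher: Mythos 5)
Your argument is correct and follows essentially the same route as the paper: both extract the two identities from Lemma \ref{lem-6}, bound the $g$-identity below by a power of $\|u\|_{\E_p}$ and above by $K^l\|g\|_1\|u\|_{\E_p}^l$ to pin $\|u\|_{\E_p}$ away from $0$, and then thread that lower bound through the $f$-identity to force $\la \ge \la_1$. The only (harmless) difference is that you run the comparison once using the $b$-coefficients, which yields exactly the paper's $\la_3$, whereas the paper also runs the parallel comparison with the $a$-coefficients and sets $\la_1=\min\{\la_2,\la_3\}$; either threshold suffices for the conclusion.
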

\begin{proof}
If $\M_{\la,M}^0(\s)$ is a nonempty set, then let $u \in \M_{\la,M}^0(\s).$ From the above lemma we get,
\begin{align*}
0< \frac{1}{l-q}\left((l-p(k+1)) a\|u\|_{\E_p}^{p(k+1)} + (l-p)b \|u\|_{\E_p}^p \right) =\la \int\limits_{\s} f(x)|u|^{q} \dm \leq \la \|f\|_1K^q\|u\|_{\E_p}^q.
\end{align*}
This implies
\begin{equation}\label{eq_com1}
  \|u\|_{\E_p} \leq \left(\frac{\la(l-q)\|f\|_1 K^q}{(l-pk-p)a}\right)^{\rfrac{1}{p(k+1)-q}}\text{ and } \|u\|_{\E_p} \leq \left(\frac{\la(l-q)\|f\|_1 K^q}{(l-p)b}\right)^{\rfrac{1}{p-q}}.
\end{equation}
Also we get, 
$$\frac{1}{l-q}\left((p(k+1)-q) a\|u\|_{\E_p}^{p(k+1)} + (p-q)b \|u\|_{\E_p}^p\right) = \int\limits_{\s} g(x)|u|^{l} \dm.$$
This implies 
\begin{equation}\label{eq_com2}
  \|u\|_{\E_p} \geq \left(\frac{(pk+p-q)a}{\|g\|_1K^l(l-q)}\right)^{\rfrac{1}{l-p(k+1)}} \text{ and } \|u\|_{\E_p} \geq \left(\frac{(p-q)b}{\|g\|_1K^l(l-q)}\right)^{\rfrac{1}{l-p}}.
\end{equation}
Comparing first part of Eq. \eqref{eq_com1} to first part of Eq. \eqref{eq_com2} we get, 
$$\la \geq \left(\frac{(pk+p-q)a}{\|g\|_1K^l(l-q)}\right)^{\frac{pk+p-q}{l-pk-p}} \frac{(l-pk-p)a}{(l-q)\|f\|_1K^q} \coloneqq \la_2.$$
Comparing second part of Eq. \eqref{eq_com1} to second part of Eq. \eqref{eq_com2} we get,
$$\la \geq \left(\frac{(p-q)b}{\|g\|_1K^l(l-q)}\right)^{\rfrac{p-q}{l-p}} \frac{(l-p)b}{(l-q)\|f\|_1K^q} \coloneqq \la_3.$$
Now if we define $\la_1 = \min\{\la_2,\la_3\}$, then we have $\la \ge \la_1$. Therefore, for $0 < \la < \la_1$ $\M_{\la,M}^0(\s)$ must be an empty set.
\end{proof}
\begin{lemma}\label{lem-7}
There exists a positive real number $\widehat{\la}_1$ such that $\inf_{u\in \M_{\la,M}^-}I_{\la,M}(u) > \de_1$ for all $0<\la<\widehat{\la}_1$. 
\end{lemma}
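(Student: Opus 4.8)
The plan is to first show that every $u \in \M_{\la,M}^-(\s)$ has norm $\|u\|_{\E_p}$ bounded below by a positive constant that does \emph{not} depend on $\la$, and then to feed this bound into the representation \eqref{eq-2} of $I_{\la,M}$ to extract a uniform positive lower bound on the functional.

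For the first step I would use the second expression for $\phi_u''(1)$ derived at the end of Section 3, namely
$$\phi_u''(1) = (p(k+1)-q)a\|u\|_{\E_p}^{p(k+1)} + (p-q)b\|u\|_{\E_p}^p + (q-l)\int_{\s} g(x)|u|^{l}\dm.$$
Since $u \in \M_{\la,M}^-(\s)$ forces $\phi_u''(1)<0$ and since $q-l<0$, this rearranges to
$$(l-q)\int_{\s} g(x)|u|^{l}\dm > (p(k+1)-q)a\|u\|_{\E_p}^{p(k+1)} + (p-q)b\|u\|_{\E_p}^p.$$
On the other hand, by the continuity of $u$ and Lemma \ref{lem-5} we have $\int_{\s} g(x)|u|^{l}\dm \le \|g\|_1 K^l \|u\|_{\E_p}^l$. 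Discarding the nonnegative $a$-term on the right above and combining gives $(p-q)b\|u\|_{\E_p}^p < (l-q)\|g\|_1 K^l \|u\|_{\E_p}^l$, so that, since $l>p$,
$$\|u\|_{\E_p} > \left(\frac{(p-q)b}{(l-q)\|g\|_1 K^l}\right)^{1/(l-p)} =: C_0 > 0,$$
a bound independent of $\la$.

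For the second step I would start from \eqref{eq-2}, estimate $\int_{\s} f(x)|u|^q\dm \le \|f\|_1 K^q\|u\|_{\E_p}^q$ again via Lemma \ref{lem-5}, and abbreviate $t=\|u\|_{\E_p}$, $A=(\tfrac{1}{p(k+1)}-\tfrac1l)a$, $B=(\tfrac1p-\tfrac1l)b$, $D=(\tfrac1q-\tfrac1l)\|f\|_1K^q$, all positive because $q<p<p(k+1)<l$. This yields
$$I_{\la,M}(u) \ge A t^{p(k+1)} + B t^p - \la D t^q \ge t^q\left(B t^{p-q} - \la D\right),$$
where the last step discards the nonnegative term $A t^{p(k+1)}$. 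Restricting to $t \ge C_0$ and choosing $\la$ small enough that $\la D \le \tfrac12 B C_0^{p-q}$, the bracket is at least $\tfrac12 B C_0^{p-q}$, whence $I_{\la,M}(u) \ge \tfrac12 B C_0^{p} =: \de_1 > 0$. Setting $\widehat{\la}_1 = \min\{\la_1, B C_0^{p-q}/(2D)\}$, with $\la_1$ from Lemma \ref{lem-4} to stay inside the good regime, gives the claim for all $0<\la<\widehat{\la}_1$.

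The main obstacle is the \emph{$\la$-independence} of the lower bound $C_0$ on $\|u\|_{\E_p}$ over $\M_{\la,M}^-(\s)$: without it, the threshold below which the bracket $B t^{p-q}-\la D$ stays positive would itself depend on $\la$, and the whole argument would collapse. The representation \eqref{eq-2}, which eliminates the $g$-integral, is precisely what lets the final estimate depend on $\la$ only through the single term $\la D t^q$; pairing it with the norm lower bound obtained from the $g$-side inequality is the crux of the proof.
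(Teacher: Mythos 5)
Your proof is correct and follows essentially the same route as the paper: a $\la$-independent lower bound $\|u\|_{\E_p}\ge C_0$ on $\M_{\la,M}^-(\s)$ extracted from $\phi_u''(1)<0$ via the $g$-integral, then fed into \eqref{eq-2} with the $f$-integral estimated through Lemma \ref{lem-5}. The only differences are cosmetic: you rederive the norm bound directly from the $\M^-$ condition (the paper reuses \eqref{eq_com2}, which was stated for $\M^0$), you keep only the $b$-branch where the paper runs both the $a$- and $b$-branches and takes a minimum, and your factor of $\tfrac{1}{2}$ makes $\de_1$ manifestly independent of $\la$.
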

\begin{proof}
Let $u \in \M_{\la,M}^-.$ Putting the second inequality of \eqref{eq_com2} in Eq.\eqref{eq-2}, we get
\begin{align*}
I_{\la,M}(u) &=\left(\frac{1}{p(k+1)}-\frac{1}{l}\right)a\|u\|_{\E_p}^{p(k+1)} + \left(\frac{1}{p}-\frac{1}{l}\right) b\|u\|_{\E_p}^p - \left(\frac{1}{q}-\frac{1}{l}\right) \la \int\limits_{\s} f(x)|u|^{q} \dm\\
		&\geq \left(\frac{1}{p}-\frac{1}{l}\right) b\|u\|_{\E_p}^p - \left(\frac{1}{q}-\frac{1}{l}\right) \la \|f\|_1K^q\|u\|_{\E_p}^{q}\\
		&= \|u\|_{\E_p}^q \left(\left(\frac{1}{p}-\frac{1}{l}\right) b\|u\|_{\E_p}^{p-q} - \left(\frac{1}{q}-\frac{1}{l}\right) \la \|f\|_1K^q\right)\\
		&\geq \left(\frac{(p-q)b}{(l-q)K^l\|g\|_1}\right)^\frac{q}{l-p} \left(\left(\frac{1}{p}-\frac{1}{l}\right) b\left(\frac{(p-q)b}{(l-q)K^l\|g\|_1}\right)^\frac{p-q}{l-p} - \left(\frac{1}{q}-\frac{1}{l}\right) \la \|f\|_1K^q\right)\\
		&\coloneqq \de_1>0.
\end{align*}
If \begin{align*}
& 0< \left(\frac{1}{p}-\frac{1}{l}\right) b\left(\frac{(p-q)b}{(l-q)K^l\|g\|_1}\right)^\frac{p-q}{l-p} - \left(\frac{1}{q}-\frac{1}{l}\right) \la \|f\|_1K^q \\
i.e., & \left(\frac{1}{q}-\frac{1}{l}\right) \la \|f\|_1K^q < \left(\frac{1}{p}-\frac{1}{l}\right) b\left(\frac{(p-q)b}{(l-q)K^l\|g\|_1}\right)^\frac{p-q}{l-p}  \\
i.e., & ~\la < \left(\frac{q}{p}\right)\frac{(l-p)b}{(l-q)K^q \|f\|_1} \left(\frac{(p-q)b}{(l-q)K^l\|g\|_1}\right)^\frac{p-q}{l-p} = \left(\frac{q}{p}\right) \la_3.
\end{align*}
Similarly, putting the first inequality of \eqref{eq_com2} in Eq. \eqref{eq-2}, we get
\begin{align*}
I_{\la,M}(u) &=\left(\frac{1}{p(k+1)}-\frac{1}{l}\right)a\|u\|_{\E_p}^{p(k+1)} + \left(\frac{1}{p}-\frac{1}{l}\right) b\|u\|_{\E_p}^p - \left(\frac{1}{q}-\frac{1}{l}\right) \la \int\limits_{\s} f(x)|u|^{q} \dm\\
	&\geq \left(\frac{1}{p(k+1)}-\frac{1}{l}\right)a\|u\|_{\E_p}^{p(k+1)} - \left(\frac{1}{q}-\frac{1}{l}\right) \la \|f\|_1K^q\|u\|_{\E_p}^{q} \\
&\geq \left(\frac{(p(k+1)-q)a}{\|g\|_1K^l(l-q)}\right)^{\frac{q}{l-p(k+1)}}\left(\left(\frac{1}{p(k+1)}-\frac{1}{l}\right)a\left(\frac{(p(k+1)-q)a}{\|g\|_1K^l(l-q)}\right)^{\frac{p(k+1)-q}{l-p(k+1)}} - \left(\frac{1}{q}-\frac{1}{l}\right) \la \|f\|_1K^q \right)
\coloneqq \de_1>0.
\end{align*}
If
\begin{align*}
& 0<\left(\frac{1}{p(k+1)}-\frac{1}{l}\right)a\left(\frac{(p(k+1)-q)a}{\|g\|_1K^l(l-q)}\right)^{\frac{p(k+1)-q}{l-p(k+1)}} - \left(\frac{1}{q}-\frac{1}{l}\right) \la \|f\|_1K^q\\
i.e., & \left(\frac{1}{q}-\frac{1}{l}\right) \la \|f\|_1K^q < \left(\frac{1}{p(k+1)}-\frac{1}{l}\right)a\left(\frac{(p(k+1)-q)a}{\|g\|_1K^l(l-q)}\right)^{\frac{p(k+1)-q}{l-p(k+1)}} \\
i.e., & ~\la < \left(\frac{q}{p(k+1)}\right)\frac{(l-p(k+1))a}{(l-q)\|f\|_1K^q } \left(\frac{(p(k+1)-q)a}{\|g\|_1K^l(l-q)}\right)^{\frac{p(k+1)-q}{l-p(k+1)}} = \left(\frac{q}{p(k+1)}\right) \la_2.\\
\end{align*}
Hence define $\widehat{\la}_1 = \min\left\{\left(\frac{q}{p}\right) \la_3, \left(\frac{q}{p(k+1)}\right) \la_2\right\}.$ This completes the proof.
\end{proof}
\section{Proof of the Main Results}
\begin{theorem}\label{thm-1}
If $0<\la < \la_1,$ then there exists a minimizer of $I_{\la,M}$ on $\M_{\la,\M}^{+}(\s).$
\end{theorem}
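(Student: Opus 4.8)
The plan is to apply the direct method of the calculus of variations on the set $\M_{\la,M}^+(\s)$. Set $c^+ \coloneqq \inf_{u \in \M_{\la,M}^+(\s)} I_{\la,M}(u)$; this is finite since $I_{\la,M}$ is bounded below on $\M_{\la,M}(\s)$ (Section 3). I would first record that $c^+ < 0$: if $u \in \M_{\la,M}^+(\s)$ then $t = 1$ is the local minimum of the fibering map $\phi_u$ in Case II or IV of Section 4, and since $\phi_u(0) = 0$ while $\phi_u$ has already decreased below $0$ by $t = 1$, we get $I_{\la,M}(u) = \phi_u(1) < 0$; taking any single such $u$ shows $c^+ < 0$.

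Next, take a minimizing sequence $\{u_n\} \subset \M_{\la,M}^+(\s)$ with $I_{\la,M}(u_n) \to c^+$. By coercivity of $I_{\la,M}$ on $\M_{\la,M}(\s)$ the sequence is bounded in $\dom_0(\E_p)$, so Lemma \ref{lem-2} (control of oscillation by energy) and Lemma \ref{lem-5} make $\{u_n\}$ uniformly bounded and equicontinuous on the compact set $\s$; by Arzel\`a--Ascoli a subsequence converges uniformly to some $u_0 \in C(\s)$ with $u_0|_{\s_0} = 0$. Uniform convergence gives $\int_\s f|u_n|^q \dm \to \int_\s f|u_0|^q \dm$ and $\int_\s g|u_n|^l \dm \to \int_\s g|u_0|^l \dm$, and lower semicontinuity of the energy gives $\|u_0\|_{\E_p} \le \liminf \|u_n\|_{\E_p}$. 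Writing the Nehari identity \eqref{eq-1} as $\psi(\|u_n\|_{\E_p}^p) = \la \int_\s f|u_n|^q \dm + \int_\s g|u_n|^l \dm$ with $\psi(s) = a s^{k+1} + b s$ strictly increasing on $[0,\infty)$, the right-hand side converges, so $\|u_n\|_{\E_p}^p \to \psi^{-1}\!\big(\la\int_\s f|u_0|^q\dm + \int_\s g|u_0|^l\dm\big) =: \rho^p$. If $u_0 = 0$ this limit is $\psi^{-1}(0) = 0$, forcing $I_{\la,M}(u_n) \to 0$ and contradicting $c^+ < 0$; hence $u_0 \ne 0$ and $\rho \ge \|u_0\|_{\E_p} > 0$. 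Passing to the limit in the inequality $\la(l-q)\int_\s f|u_n|^q\dm > (l-p(k+1))a\|u_n\|_{\E_p}^{p(k+1)} + (l-p)b\|u_n\|_{\E_p}^p$, which holds because $u_n \in \M_{\la,M}^+(\s)$, also yields $\int_\s f|u_0|^q\dm > 0$.

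The crux is to rule out loss of energy, i.e.\ to prove $\|u_0\|_{\E_p} = \rho$; I expect this to be the main obstacle. Suppose instead $\|u_0\|_{\E_p} < \rho$, so $\phi_{u_0}'(1) = \psi(\|u_0\|_{\E_p}^p) - \psi(\rho^p) < 0$. Since $\int_\s f|u_0|^q\dm > 0$, the map $\phi_{u_0}$ falls under Case II or IV and has a local minimum at its smallest critical point $\bar t$ with $\bar t\, u_0 \in \M_{\la,M}^+(\s)$. Comparing the critical-point equation for $\phi_{u_0}$ with that of the limiting fibering map of energy $\rho^p$, whose smallest critical point is $t = 1$, the strictly smaller energy of $u_0$ forces $\bar t > 1$, so $\phi_{u_0}$ is still decreasing at $t = 1$ and $\phi_{u_0}(\bar t) < \phi_{u_0}(1)$. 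Then $c^+ \le I_{\la,M}(\bar t\, u_0) = \phi_{u_0}(\bar t) < \phi_{u_0}(1) = I_{\la,M}(u_0)$, whereas lower semicontinuity gives $I_{\la,M}(u_0) \le \liminf I_{\la,M}(u_n) = c^+$, a contradiction. The delicate points are the existence of $\bar t$ (secured from the strict monotonicity of $\psi$ together with the convergence of critical points, or, alternatively, from an $(S_+)$-type or uniform-convexity property of $\|\cdot\|_{\E_p}$ yielding outright strong convergence $u_n \to u_0$) and the strict inequality $\bar t > 1$.

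With $\|u_n\|_{\E_p} \to \|u_0\|_{\E_p}$ established, the functional values converge and $I_{\la,M}(u_0) = c^+$, while the Nehari identity passes to the limit to give $\phi_{u_0}'(1) = 0$, i.e.\ $u_0 \in \M_{\la,M}(\s)$. Finally, the convergences $\|u_n\|_{\E_p} \to \|u_0\|_{\E_p}$, $\int_\s f|u_n|^q\dm \to \int_\s f|u_0|^q\dm$ and $\int_\s g|u_n|^l\dm \to \int_\s g|u_0|^l\dm$ give $\phi_{u_n}''(1) \to \phi_{u_0}''(1) \ge 0$; since $0 < \la < \la_1$ makes $\M_{\la,M}^0(\s)$ empty by Lemma \ref{lem-4}, we cannot have $\phi_{u_0}''(1) = 0$, so $\phi_{u_0}''(1) > 0$ and $u_0 \in \M_{\la,M}^+(\s)$. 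Thus $u_0$ is the required minimizer.
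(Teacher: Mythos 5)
Your proposal is correct and follows essentially the same route as the paper: a minimizing sequence, boundedness by coercivity, Arzel\`a--Ascoli via Lemmas \ref{lem-2} and \ref{lem-5}, convergence of the two integrals, lower semicontinuity of the $p$-energy, negativity of the infimum, positivity of $\int_{\s}f(x)|u_0|^q\dm$, and finally a fibering-map projection $\bar t\,u_0\in\M_{\la,M}^{+}(\s)$ with $\bar t>1$ that contradicts the infimum if any energy is lost in the limit. Your one refinement --- reading the Nehari identity as $\psi(\|u_n\|_{\E_p}^p)=\la\int_\s f|u_n|^q\dm+\int_\s g|u_n|^l\dm$ with $\psi(s)=as^{k+1}+bs$ strictly increasing, so that $\|u_n\|_{\E_p}$ genuinely converges to some $\rho$ --- is a clean touch, and the two points you flag as delicate (existence of $\bar t$ in Case IV and the inequality $\bar t>1$) are present at exactly the same level of rigor in the paper's own proof, where $t_0\ge 1$ is deduced from $\phi'_{u_n}(t_0)>0$ for some $n$.
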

\begin{proof}

As we have shown that $I_{\la,M}$ is bounded below on $\M_{\la,M}(\s)$, it is bounded below on $\M_{\la,M}^{+}(\s)$ as well. Thus there exists a sequence $\{u_n\} \subset \M_{\la,M}^{+}(\s)$ such that
$$\lim\limits_{n \to \infty}I_{\la,M}(u_n) = \inf\limits_{u \in \M_{\la,M}^{+}(\s)} I_{\la,M}(u).$$
Now we claim that $\{u_n\}$ is bounded in $\dom_0(\E_p)$. If $\{u_n\}$ is unbounded then there exists a subsequence $\{u_{n_k}\}$ such that $\|u_{n_k}\|_{\E_p} \to \infty$ as $k \to \infty$ and as $I_{\la,M}$ is coercive, $I_{\la,M}(u_{n_k}) \to \infty$ as $k \to \infty.$  So, $\lim\limits_{n \to \infty}I_{\la,M}(u_n)= \lim\limits_{k \to \infty}I_{\la,M}(u_{n_k})=\infty$  which is a contradiction as $\M_{\la,M}^+(\s)$ is nonempty.\\

\uline{Claim} : Sequence of functions $\{u_n\}$ is equicontinuous.\\
By Lemma \ref{lem-2}, $|u_n(x) - u_n(y)| \leq K_p (\E_p(u))^{1/p} (r_p^{1/p})^m$ whenever $x$ and $y$ belongs to the same or adjacent cells of order $m.$ Let $ B = \sup\{\|u_n\|_{\E_p} : n \in \N\}.$ Let $\e > 0$ be given. As $0<r_p<1$ there exists $m \in \N$ such that $K_p B(r_p^m)^{1/p} < \e.$ Choose $\de = 2^{-m}$. Then $\|x-y\| < \de$ implies that $|u_n(x) - u_n(y)| < \e $ for all $n \in \N.$ Hence $\{u_n\}$ is an equicontinuous family of functions.
As $\{u_n\}\subset \dom_0(\E_p)$, uniform boundedness follows from Lemma \ref{lem-5}. By the Arzela-Ascoli theorem, there exists a subsequence of $\{u_n\}$ which we still call $\{u_n\}$ converging to a continuous function $u_0$, that is,
$$\|u_n - u_0\|_\infty \to 0~~ \text{as}~~ n \to \infty.$$
Next we claim that $u_0 \in \dom_0(\E_p).$
$$\E_p(u_0) = \sup\limits_{m} \E_p^{(m)}(u_0) =\sup\limits_{m} \lim\limits_{n \to \infty} \E_p^{(m)}(u_n) \leq \sup\limits_{m} \limsup\limits_{n \to \infty} \E_p(u_n) = \limsup\limits_{n \to \infty} \E_p(u_n) $$
As $\limsup\limits_{n \to \infty} \E_p(u_n) < +\infty$, we get the claim.\\
If we choose $u \in \dom_0(\E_p)$ such that $\int_{\s}f(x)|u|^{q} \dm >0,$ then it falls under case II or IV. So there exists $t_u$ such that $t_u u \in \M_{\la,M}^+(\s)$ and $I_{\la,M}(t_u u) <0.$ Hence, $\inf\limits_{u \in \M_{\la,M}^+(\s)} I_{\la,M}(u) < 0.$
By equation \eqref{eq-2}
$$I_{\la,M}(u_n) =\left(\frac{1}{p(k+1)}-\frac{1}{l}\right)a\|u_n\|_{\E_p}^{p(k+1)} + \left(\frac{1}{p}-\frac{1}{l}\right) b\|u_n\|_{\E_p}^p - \left(\frac{1}{q}-\frac{1}{l}\right) \la \int\limits_{\s} f(x)|u_n|^{q} \dm $$
and this implies
$$\left(\frac{1}{q}- \frac{1}{l}\right) \la \int\limits_{\s} f(x)|u_n|^{q} \dm =\left(\frac{1}{p(k+1)}-\frac{1}{l}\right)a\|u_n\|_{\E_p}^{p(k+1)} + \left(\frac{1}{p}-\frac{1}{l}\right) b\|u_n\|_{\E_p}^p - I_{\la,M}(u_n) > -I_{\la,M}(u_n)$$
Taking limit as $n \to \infty$, we see that $\int_{\s}f(x)|u_0|^{q}\dm >0.$ Hence, this falls in case II  or III. Hence there exists $t_0 > 0$ such that $t_0u_0 \in \M_{\la,M}^+(\s)$ and $\phi'_{u_0}(t_0) = 0.$
By the Lebesgue dominated convergence theorem we have $$\lim\limits_{n \to \infty} \int_{\s} f(x)|u_n|^{q} \dm = \int_{\s} f(x) |u_0|^{q} \dm$$ and
$$\lim\limits_{n \to \infty} \int_{\s} g(x)|u_n|^{l} \dm = \int_{\s} g(x) |u_0|^{l} \dm.$$
If we assume $\E_p(u_0) < \limsup\limits_{n \to \infty} \E_p(u_n)$ then we get $$\phi'_{u_0}(t) < \limsup\limits_{n \to \infty} \phi'_{u_n}(t).$$
Since $\{u_n\} \subset \M_{\la,M}^{+}(\s),~\phi'_{u_n}(1)= 0$ for all $n \in \N.$
It follows from the above assumption that $\limsup\limits_{n \to \infty} \phi '_{u_n}(t_0) > \phi '_{u_0}(t_0) = 0$ which implies that $\phi '_{u_n}(t_0) > 0$ for some $n.$ Hence $t_0 \geq 1.$
As $t_0 u_0 \in \M_{\la,M}^+(\s),$ we get the following
\begin{align*}
  \inf\limits_{u \in \M_{\la,M}^+(\s)} I_{\la,M}(u) &\leq I_{\la,M}(t_0 u_0) = \phi_{u_0}(t_0) < \phi_{u_0}(1) < \limsup\limits_{n \to \infty} \phi_{u_n}(1)\\
   &= \limsup\limits_{n \to \infty} I_\la(u_n) = \lim\limits_{n \to \infty} I_\la(u_n) = \inf\limits_{u \in \M_{\la,M}^+(\s)} I_{\la,M}(u)
\end{align*}
which is a contradiction. Thus $t_0 = 1$ and $\E_p(u_0) = \limsup\limits_{n \to \infty} \E_p(u_n).$\\
So, $$I_{\la,M}(u_0) = \limsup\limits_{n \to \infty} I_{\la,M}(u_n) = \lim\limits_{n \to \infty} I_{\la,M}(u_n) = \inf\limits_{u \in \M_{\la,M}^+(\s)} I_{\la,M}(u).$$
Hence, $u_0$ is a minimizer of $I_{\la,M}$ on $\M_{\la,M}^+(\s).$
\end{proof}
\begin{theorem}\label{thm-2}
  If $0 < \la < \widehat{\la}_1$, then there exists a minimizer of $I_{\la,M}$ on $\M_{\la,M}^{-}(\s).$
\end{theorem}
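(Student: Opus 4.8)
The plan is to mirror the proof of Theorem \ref{thm-1}, replacing $\M_{\la,M}^+(\s)$ by $\M_{\la,M}^-(\s)$, but with care that on $\M_{\la,M}^-(\s)$ the fibering map attains a \emph{maximum} at $t=1$ rather than a minimum; this reverses the comparison of fibering maps in the final step. Note first that $0<\la<\widehat{\la}_1$ implies $\la<\la_1$ (since $(q/p)\la_3<\la_3$ and $(q/(p(k+1)))\la_2<\la_2$ force $\widehat{\la}_1<\min\{\la_2,\la_3\}=\la_1$), so Lemma \ref{lem-4} gives $\M_{\la,M}^0(\s)=\emptyset$, while Lemma \ref{lem-7} gives $\inf_{u\in\M_{\la,M}^-(\s)}I_{\la,M}(u)\ge\de_1>0$; the set $\M_{\la,M}^-(\s)$ is nonempty because any $u$ with $\int_{\s}g|u|^{l}\dm>0$ falls under Case III or IV.

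First I would fix a minimizing sequence $\{u_n\}\subset\M_{\la,M}^-(\s)$ with $I_{\la,M}(u_n)\to c:=\inf_{\M_{\la,M}^-(\s)}I_{\la,M}$. Exactly as in Theorem \ref{thm-1}, coercivity forces $\{u_n\}$ to be bounded in $\dom_0(\E_p)$; Lemma \ref{lem-2} yields equicontinuity and Lemma \ref{lem-5} uniform boundedness, so by Arzelà--Ascoli a subsequence (still denoted $\{u_n\}$) converges uniformly to some $u_0$, and $\E_p(u_0)=\sup_m\E_p^{(m)}(u_0)\le\liminf_n\E_p(u_n)$ shows $u_0\in\dom_0(\E_p)$. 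Passing to a further subsequence I may assume $\E_p(u_n)$ converges, and the Lebesgue dominated convergence theorem gives $\int_{\s}f|u_n|^{q}\dm\to\int_{\s}f|u_0|^{q}\dm$ and $\int_{\s}g|u_n|^{l}\dm\to\int_{\s}g|u_0|^{l}\dm$.

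The crucial step, and the main obstacle, is to rule out $u_0\equiv0$. Here I would exploit that $\M_{\la,M}^-(\s)$ stays uniformly away from the origin: writing $\phi_{u_n}''(1)<0$ in the second form from Section 3 gives $(l-q)\int_{\s}g|u_n|^{l}\dm>(p-q)b\|u_n\|_{\E_p}^p$, which together with $\int_{\s}g|u_n|^{l}\dm\le\|g\|_1K^l\|u_n\|_{\E_p}^l$ reproduces the second estimate of \eqref{eq_com2} and so yields a uniform lower bound $\|u_n\|_{\E_p}\ge C_0>0$. Consequently $\int_{\s}g|u_n|^{l}\dm\ge\frac{(p-q)b}{l-q}C_0^p>0$ for every $n$, and letting $n\to\infty$ gives $\int_{\s}g|u_0|^{l}\dm>0$. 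In particular $u_0\ne0$ and $u_0$ falls under Case III or IV, so the fibering analysis of Section 4 provides a $t_0>0$ with $t_0u_0\in\M_{\la,M}^-(\s)$ sitting at the local maximum of $\phi_{u_0}$.

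Finally I would identify the infimum. Lower semicontinuity of $\E_p$, the monotone positive dependence of $\phi_u(t)$ on the energy, and convergence of the two integrals give, for each fixed $t>0$, the pointwise inequality $\phi_{u_0}(t)\le\liminf_n\phi_{u_n}(t)$. On the other hand, for $u_n\in\M_{\la,M}^-(\s)$ the point $t=1$ is the global maximum of $\phi_{u_n}$ on $(0,\infty)$ (in Case III it is the unique critical point; in Case IV the only competitor is the value $0$ approached as $t\to0^+$, and $\phi_{u_n}(1)=I_{\la,M}(u_n)>\de_1>0$ by Lemma \ref{lem-7}), whence $\phi_{u_n}(t_0)\le\phi_{u_n}(1)=I_{\la,M}(u_n)$ for all $n$. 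Chaining these,
\[
c\le I_{\la,M}(t_0u_0)=\phi_{u_0}(t_0)\le\liminf_n\phi_{u_n}(t_0)\le\liminf_n\phi_{u_n}(1)=\lim_n I_{\la,M}(u_n)=c,
\]
so every inequality is an equality, $I_{\la,M}(t_0u_0)=c$, and since $t_0u_0\in\M_{\la,M}^-(\s)$ it is the desired minimizer. The only point requiring care beyond routine estimates is the maximum structure on $\M_{\la,M}^-(\s)$: it is precisely the identity $\sup_{t>0}\phi_{u_n}(t)=\phi_{u_n}(1)$ that makes the comparison run in the correct direction, in contrast to the minimizing comparison used for $\M_{\la,M}^+(\s)$.
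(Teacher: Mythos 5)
Your proposal is correct, and at the macro level it follows the same strategy as the paper: take a minimizing sequence in $\M_{\la,M}^{-}(\s)$, use coercivity plus Lemma \ref{lem-2}, Lemma \ref{lem-5} and Arzel\`a--Ascoli to extract a uniformly convergent subsequence with limit $u_0\in\dom_0(\E_p)$, show $\int_{\s}g|u_0|^{l}\dm>0$ so that $u_0$ falls under Case III or IV, and then compare fibering maps. The two places where you genuinely deviate are worth recording. First, to get $\int_{\s}g|u_0|^{l}\dm>0$ the paper uses identity \eqref{eq-3} together with the uniform bound $I_{\la,M}(v_n)\ge\de_1$ from Lemma \ref{lem-7}, which gives $\bigl(\tfrac1q-\tfrac1l\bigr)\int_{\s}g|v_n|^{l}\dm\ge\de_1$ directly; you instead extract a uniform lower bound on $\|u_n\|_{\E_p}$ from $\phi_{u_n}''(1)<0$ (reproducing \eqref{eq_com2}) and feed it back into the same inequality. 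Both work; the paper's route is shorter and does not need the norm bound. Second, and more substantively, your endgame is different: the paper argues by contradiction that $\E_p(v_0)=\limsup_n\E_p(v_n)$, deduces $\phi_{v_0}'(1)=0$ and $\phi_{v_0}''(1)\le0$, and then invokes Lemma \ref{lem-4} ($\M^0_{\la,M}(\s)=\emptyset$) to conclude that $v_0$ \emph{itself} lies in $\M_{\la,M}^{-}(\s)$ and is the minimizer. You bypass all of this with a direct sandwich $c\le I_{\la,M}(t_0u_0)\le\liminf_n\phi_{u_n}(t_0)\le\liminf_n\phi_{u_n}(1)=c$, whose key ingredient is your (correctly justified) observation that $t=1$ is the \emph{global} maximum of $\phi_{u_n}$ on $(0,\infty)$ because $\phi_{u_n}(1)=I_{\la,M}(u_n)\ge\de_1>0$ dominates the value $0$ approached as $t\to0^{+}$ in Case IV. This is cleaner and avoids the emptiness of $\M^0_{\la,M}(\s)$ entirely, at the small cost that your minimizer is $t_0u_0$ rather than $u_0$; since Theorem \ref{thm-8} only needs \emph{some} minimizer on $\M_{\la,M}^{-}(\s)$, this costs nothing. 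Both proofs share the same unexamined reliance on the Case IV assertion of Section 4 that a local maximum of $\phi_u$ exists for the stated range of $\la$, so I do not count that against you.
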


\begin{proof}
By Lemma \ref{lem-7}, we have $I_{\la,M}(v) \geq \de_1 > 0$ for all $v \in \M_{\la,M}^{-} (\s).$ So, $\inf\limits_{v \in \M_{\la,M}^{-} (\s)} I_{\la,M}(v) \geq \de_1$ and there exists a sequence $\{v_n\} \subset \M_{\la,M}^-(\s)$ such that
$$ \lim\limits_{n \to \infty} I_{\la,M}(v_n) = \inf\limits_{v \in \M_{\la,M}^{-} (\s)} I_{\la,M}(v).$$
Using similar arguments as in the last theorem, we can show that $\{v_n\}$ is bounded in $\dom_0(\E_p),~ \{v_n\}$ converges to $v_0$ uniformly and $v_0 \in \dom_0(\E_p)$. Hence $v_0$ is continuous.\\
By \eqref{eq-3}
$$I_{\la,M}(u) =\left(\frac{1}{p(k+1)}-\frac{1}{q}\right)a\|u\|_{\E_p}^{p(k+1)} + \left(\frac{1}{p}-\frac{1}{q}\right) b\|u\|_{\E_p}^p + \left(\frac{1}{q}-\frac{1}{l}\right) \int\limits_{\s} g(x)|u|^{l} \dm.$$
This implies,
$$ \left(\frac{1}{q}-\frac{1}{l}\right) \int\limits_{\s} g(x)|u|^{l} \dm =I_{\la,M}(u)- \left(\frac{1}{p(k+1)}-\frac{1}{q}\right)a\|u\|_{\E_p}^{p(k+1)} - \left(\frac{1}{p}-\frac{1}{q}\right) b\|u\|_{\E_p}^p \geq I_{\la,M}(u)\geq \de_1 > 0.$$
Taking limit as $n \to \infty$, we see that $\int_{\s}g(x)|v_0|^{l}\dm >0.$ Since $\int_{\s}g(x)|v_0|^{l}\dm >0,$ then it falls in case III or IV. Hence there exists a $t_1 > 0$ such that $t_1 v_0 \in \M_{\la,M}^-(\s)$ and $\phi'_{v_0}(t_1) = 0.$
Using the Lebesgue dominated convergence theorem we have $$\lim\limits_{n \to \infty} \int_{\s} f(x)|v_n|^{q} \dm = \int_{\s} f(x) |v_0|^{q} \dm$$ and
$$\lim\limits_{n \to \infty} \int_{\s} g(x)|v_n|^{l} \dm = \int_{\s} g(x) |v_0|^{l} \dm.$$
If we assume $\E_p(v_0) < \limsup\limits_{n \to \infty} \E_p(v_n)$, then we get $\phi'_{v_0}(t) < \limsup\limits_{n \to \infty} \phi'_{v_n}(t).$
Since $\{v_n\} \subset \M_{\la,M}^{-}(\s)$, $\phi'_{v_n}(1)= 0$ for all $n \in \N.$
It follows from the above assumption that $\limsup\limits_{n \to \infty} \phi '_{v_n}(t_1) > \phi '_{v_0}(t_1) = 0$ which implies that $\phi '_{v_n}(t_1) > 0$ for some $n.$ Hence $t_1 \leq 1.$ As $t_1 v_0 \in \M_{\la,M}^-(\s)$ we get the following
\begin{align*}
  I_{\la,M}(t_1 v_0) &= \phi_{v_0}(t_1) < \limsup\limits_{n \to \infty} \phi_{v_n}(t_1) \leq \limsup\limits_{n\to \infty} \phi_{v_n}(1)\\
   &= \limsup\limits_{n \to \infty} I_{\la,M}(v_n) = \lim\limits_{n \to \infty} I_{\la,M}(v_n) = \inf\limits_{v \in \M_{\la,M}^{-}(\s)} I_{\la,M}(v)
\end{align*}
which is a contradiction. Thus $\E_p(v_0) = \limsup\limits_{n \to \infty} \E_p(v_n).$\\
So $\phi'_{v_0}(1) =0$ and $\phi''_{v_0}(1) \leq 0.$ As we know from Lemma \ref{lem-4}, $\M_{\la,M}^0(\s) = \emptyset$ for all $\la < \la_1$ then $\phi''_{v_0}(1) <0.$
So, $v_0 \in \M_{\la,M}^-(\s)$ and $$I_{\la,M}(v_0) = \limsup\limits_{n \to \infty} I_{\la,M}(v_n) = \lim\limits_{n \to \infty} I_{\la,M}(v_n) = \inf\limits_{v \in \M_{\la,M}^{-}(\s)} I_{\la,M}(v)$$
Hence, $v_0$ is a minimizer for $I_{\la,M}$ on $\M_{\la,M}^{-}(\s).$
\end{proof}
\begin{lemma}\label{lem-3}
For each $w \in \dom_0(\E_p),$ the following hold true :
\begin{enumerate}[(i)]
\item there exists $\e_0 > 0$ such that for each $\e \in (-\e_0, \e_0)$ there exists a unique $\bar{t_\e}>0$ such that $\bar{t_\e}(u_0 + \e w) \in \M_{\la,M}^{+}(\s),$ where $u_0 \in \M_{\la,M}^{+}(\s)$ and $I_{\la,M}(u_0) = \inf\limits_{u \in \M_{\la,M}^{+}(\s)}I_{\la,M}(u).$ Also, $\bar{t_\e} \to 1$ as $\e \to 0.$
\item there exists $\e_1 > 0$ such that for each $\e \in (-\e_1, \e_1)$ there exists a unique $t_\e >0$ such that $t_\e(v_0 + \e w) \in \M_{\la,M}^-(\s),$ where $v_0 \in \M_{\la,M}^{-}(\s)$ and $I_{\la,M}(v_0) = \inf\limits_{v \in\M_{\la,M}^{-}(\s)}I_{\la,M}(v).$ Also, $t_\e \to 1$ as $\e \to 0$.
\end{enumerate}
\end{lemma}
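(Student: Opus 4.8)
The plan is to prove both parts by a single application of the implicit function theorem to the fibering derivative, exploiting that $u_0$ (resp.\ $v_0$) lies in $\M_{\la,M}^+(\s)$ (resp.\ $\M_{\la,M}^-(\s)$), where the relevant second derivative is strictly signed and hence nonzero. Fix $w \in \dom_0(\E_p)$ and define, for $t>0$ and $\e$ near $0$,
\[
\Phi(t,\e) \coloneqq \phi'_{u_0+\e w}(t) = a\,t^{p(k+1)-1}\|u_0+\e w\|_{\E_p}^{p(k+1)} + b\,t^{p-1}\|u_0+\e w\|_{\E_p}^{p} - \la\, t^{q-1}\!\int_\s f|u_0+\e w|^q\dm - t^{l-1}\!\int_\s g|u_0+\e w|^l\dm .
\]
Since $u_0 \in \M_{\la,M}^+(\s) \subset \M_{\la,M}(\s)$, Lemma \ref{lem-1} gives $\Phi(1,0)=\phi'_{u_0}(1)=0$, so $(t,\e)=(1,0)$ is the base solution to continue from.

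Next I would apply the implicit function theorem at $(1,0)$. The $t$-partial is $\pa_t \Phi(t,\e)=\phi''_{u_0+\e w}(t)$, and at the base point $\pa_t\Phi(1,0)=\phi''_{u_0}(1)>0$ precisely because $u_0\in\M_{\la,M}^+(\s)$; in particular it does not vanish. Granting the joint $C^1$ regularity of $\Phi$ (addressed below), the theorem produces $\e_0>0$ and a $C^1$ map $\e\mapsto\bar t_\e$ on $(-\e_0,\e_0)$ with $\bar t_0=1$, $\Phi(\bar t_\e,\e)=0$, and $\bar t_\e$ the unique zero of $\Phi(\cdot,\e)$ in a neighborhood of $1$; continuity of this map gives $\bar t_\e\to1$ as $\e\to0$. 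By Lemma \ref{lem-1}, the identity $\Phi(\bar t_\e,\e)=0$ says exactly that $\bar t_\e(u_0+\e w)\in\M_{\la,M}(\s)$.

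To confirm the point lands in $\M_{\la,M}^+(\s)$ and not merely in $\M_{\la,M}(\s)$, I would use the scaling identity $\phi''_{su}(1)=s^2\phi''_u(s)$, immediate from $\phi_{su}(\tau)=\phi_u(s\tau)$. Thus $\phi''_{\bar t_\e(u_0+\e w)}(1)=\bar t_\e^{\,2}\,\phi''_{u_0+\e w}(\bar t_\e)=\bar t_\e^{\,2}\,\pa_t\Phi(\bar t_\e,\e)$, whose sign is that of $\pa_t\Phi$ near $(1,0)$, namely positive by continuity from $\pa_t\Phi(1,0)>0$; hence $\bar t_\e(u_0+\e w)\in\M_{\la,M}^+(\s)$ after shrinking $\e_0$. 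Uniqueness of $\bar t_\e$ among \emph{all} positive multiples follows from the Section 4 fibering analysis: the maps $\e\mapsto\int_\s f|u_0+\e w|^q\dm$ and $\e\mapsto\int_\s g|u_0+\e w|^l\dm$ are continuous, so $u_0+\e w$ stays in Case II or IV for small $\e$, and on such a ray there is exactly one point of $\M_{\la,M}^+(\s)$, the local minimum. Part (ii) is identical after replacing $u_0$ by $v_0$ and the sign condition $\phi''_{u_0}(1)>0$ by $\phi''_{v_0}(1)<0$; here one additionally invokes Lemma \ref{lem-4}, so that $\M_{\la,M}^0(\s)=\emptyset$ for $\la<\la_1$, to keep the second derivative strictly negative and place $t_\e(v_0+\e w)$ in $\M_{\la,M}^-(\s)$ (Case III or IV).

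The one genuinely delicate point, which I expect to be the main obstacle, is the joint $C^1$ regularity of $\Phi$, which reduces to differentiability of $\e\mapsto\|u_0+\e w\|_{\E_p}^p=\E_p(u_0+\e w)$. The integral terms cause no trouble: they are $C^1$ in $\e$ by differentiation under the integral sign, since $|u_0+\e w|^q$ and $|u_0+\e w|^l$ have dominated $\e$-derivatives (using $q,l>1$ together with the uniform bound of Lemma \ref{lem-5}). The $p$-energy, however, is only guaranteed to possess the one-sided derivatives recorded by the interval-valued form $\E_p(u,w)$. I would resolve this either by arguing that along the direction $w$ these one-sided derivatives coincide, so that the energy is genuinely differentiable in $\e$ and the implicit function theorem applies verbatim, or else bypass the theorem altogether: since $\phi_{u_0+\e w}$ is smooth in $t$ with coefficients continuous in $\e$ and $u_0+\e w$ remains in Case II or IV, one can track its unique local-minimum critical point directly, using the strict monotonicity of $\phi'_{u_0+\e w}$ near $t=1$ and continuity of the coefficients to extract both $\bar t_\e$ and the limit $\bar t_\e\to1$.
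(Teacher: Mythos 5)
Your proposal is essentially sound and, importantly, you put your finger on exactly the obstacle that governs how this lemma must be proved: the map $\e \mapsto \E_p(u_0+\e w)$ is not known to be differentiable (the paper only has the interval-valued form $\E_p(u,v)$), so the implicit function theorem cannot be applied directly to $\Phi(t,\e)=\phi'_{u_0+\e w}(t)$ as a jointly $C^1$ function of $(t,\e)$. Your first suggested repair --- that the one-sided derivatives of the energy along the direction $w$ coincide --- is not available; nothing in the paper's framework justifies it, and if it were true the whole interval-valued formalism would be unnecessary. Your second repair is the right one, and it is precisely what the paper formalizes, in a slightly slicker way: the authors define $\hat{\f}(a_1,a_2,a_3,a_4,t)=a_1at^{p(k+1)-1}+a_2bt^{p-1}-\la a_3t^{q-1}-a_4t^{l-1}$ on $\R^4\times(0,\infty)$ and apply the implicit function theorem \emph{in the coefficient variables}, where $\hat{\f}$ is genuinely smooth and $\pa_t\hat{\f}=\phi''_{u_0}(1)>0$ at the base point. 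This yields a continuous solution map $g:B\to A$, and then only \emph{continuity} of $\e\mapsto\bigl(\|u_0+\e w\|_{\E_p}^{p(k+1)},\|u_0+\e w\|_{\E_p}^{p},\int_\s f|u_0+\e w|^q\dm,\int_\s g|u_0+\e w|^l\dm\bigr)$ is needed to conclude $\bar t_\e=g(\cdot)\to g(\cdot)|_{\e=0}=1$. Your remaining ingredients --- the continuity of $\f_1(\e)=\int_\s f|u_0+\e w|^q\dm$ to keep the ray in Case II or IV (resp.\ $\f_2$ for Case III or IV), the scaling identity $\phi''_{su}(1)=s^2\phi''_u(s)$ to confirm membership in $\M_{\la,M}^{\pm}(\s)$, and the appeal to Lemma \ref{lem-4} in part (ii) --- all match or harmlessly supplement the paper's argument. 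In short: adopt your own fallback as the main argument (or, equivalently, the paper's parametrized version of the implicit function theorem), and drop the claim that the energy is differentiable in $\e$.
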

\begin{proof}
(i) Let us define a function
$\hat{\f} : \R^4 \times (0,\infty)   \to \R $ by
$$\hat{\f}(a_1,a_2,a_3,a_4,t) = a_1at^{p(k+1)-1} + a_2 b t^{p-1}- \la a_3 t^{q-1} - a_4 t^{l-1}.$$ Then
$$\hat{\f}(a_1,a_2,a_3,a_4,t) = a_1(p(k+1)-1)at^{p(k+1)-2} + a_2 (p-1)b t^{p-2}- \la a_3(q-1) t^{q-2} - a_4(l-1) t^{l-2}.$$
Since $u_0 \in \M_{\la,M}^+(\s),$ so $\phi'_{u_0}(1) =0$ and $\phi''_{u_0}(1) >0$. Therefore,
$$\hat{\f}\left(\|u_0\|_{\E_p}^{p(k+1)},\|u_0\|_{\E_p}^p ,\int\limits_{\s}f(x)|u_0|^{q} \dm,\int\limits_{\s} g(x)|u_0|^{l} \dm,1 \right) = \phi'_{u_0}(1) = 0$$
and
$$\frac{\partial\hat{\f}}{\partial t}\left(\|u_0\|_{\E_p}^{p(k+1)},\|u_0\|_{\E_p}^p ,\int\limits_{\s}f(x)|u_0|^{q} \dm,\int\limits_{\s} g(x)|u_0|^{l} \dm, 1 \right) = \phi''_{u_0}(1) > 0.$$
The function $\f_1(\e) = \int_{\s} f(x)|u_0 +\e w|^{q}\dm$ is a continuous function and $\f_1(0) > 0$ because $u_0 \in \M_{\la,M}^{+}(\s)$. By the continuity of $\f_1$, there exists $\e_0 >0$ such that $\f_1(\e) >0$ for all $\e \in (-\e_0, \e_0)$. So, for each $\e \in (-\e_0, \e_0)$ there exists $\bar{t_\e}$ such that $\bar{t_\e}(u_0 + \e w) \in \M_{\la,M}^+(\s).$ This implies that $$\hat{\f}\left(\|u_0 + \e w\|_{\E_p}^{p(k+1)},\|u_0 + \e w\|_{\E_p}^p ,\int\limits_{\s}f(x)|u_0 + \e w|^{q} \dm,\int\limits_{\s} g(x)|u_0 + \e w|^{l} \dm, \bar{t_\e} \right) = \phi'_{u_0 + \e w}(\bar{t_\e}) = 0.$$
By the implicit function theorem there exists an open set $A \subset (0,\infty)$ containing 1, an open set $B \subset \R^4$ containing $(\|u_0\|_{\E_p}^{p(k+1)},\|u_0\|_{\E_p}^p ,\int\limits_{\s}f(x)|u_0|^{q} \dm,\int\limits_{\s} g(x)|u_0|^{l} \dm )$ and a continuous function $g : B \to A$ such that for all $y \in B$, $\hat{\f}(y, g(y)) = 0$. So there exists a unique solution to the equation $t = g(y) \in A.$
Hence,
$$\bar{t_\e} = g\left(\|u_0 + \e w\|_{\E_p}^{p(k+1)},\|u_0 + \e w\|_{\E_p}^p ,\int\limits_{\s}f(x)|u_0 + \e w|^{q} \dm,\int\limits_{\s} g(x)|u_0 + \e w|^{l} \dm\right)$$
Letting $\e \to 0$, using the continuity of $g$, we get
$$1 = g\left(\|u_0\|_{\E_p}^{p(k+1)},\|u_0\|_{\E_p}^p ,\int\limits_{\s}f(x)|u_0|^{q} \dm,\int\limits_{\s} g(x)|u_0|^{l} \dm\right)$$
Therefore, $\bar{t_\e} \to 1$ as $\e \to 0.$\\
(ii)
This can be proved by taking the function $$\f_2(\e) = \int_{\s} g(x)|v_0 +\e w|^{l}\dm$$ in place of $\f_1(\e)$ and proceeding in a similar fashion as in the proof of (i).
\end{proof}
\begin{theorem}\label{thm-8}
If $u_0$ is a minimizer of $I_{\la,M}$ on $\M_{\la,M}^{+}(\s)$ and $v_0$ is a minimizer of $I_{\la,M}$ on $\M_{\la,M}^{-}(\s),$ then $u_0$ and $v_0$ are weak solutions to the problem \eqref{prob}.
\end{theorem}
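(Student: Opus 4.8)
The plan is to exploit the fibering perturbation furnished by Lemma \ref{lem-3} together with the minimality of $u_0$ and $v_0$, reducing the weak-solution inclusion to a one-sided derivative test at the minimizer. I treat $u_0$ in detail; the argument for $v_0$ is verbatim, with Lemma \ref{lem-3}(ii) replacing (i).

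First I fix an arbitrary $w \in \dom_0(\E_p)$ and invoke Lemma \ref{lem-3}(i): there is $\e_0 > 0$ and a family $\bar t_\e \to 1$ (as $\e \to 0$) with $\bar t_\e(u_0 + \e w) \in \M_{\la,M}^+(\s)$ for $|\e| < \e_0$. Since $u_0$ minimizes $I_{\la,M}$ on $\M_{\la,M}^+(\s)$, the scalar function $F(\e) := I_{\la,M}(\bar t_\e(u_0 + \e w)) = \phi_{u_0 + \e w}(\bar t_\e)$ satisfies $F(\e) \ge F(0)$ on $(-\e_0,\e_0)$, so $\e = 0$ is a minimum of $F$. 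Next I differentiate $F$ at $\e = 0$. Using homogeneity $\E_p(t\,\cdot) = t^p\E_p(\cdot)$, every factor in $F$ is differentiable in $\e$ except $\e \mapsto \E_p(u_0 + \e w)$, which is convex with one-sided derivatives $p\,\E_p^{\pm}(u_0, w)$ at $\e = 0$; hence $F$ admits one-sided derivatives. The terms carrying $\tfrac{d}{d\e}\bar t_\e$ collect the factor $M(\|u_0\|_{\E_p}^p)\|u_0\|_{\E_p}^p - \la\int_\s f|u_0|^{q}\dm - \int_\s g|u_0|^{l}\dm$, which vanishes because $u_0 \in \M_{\la,M}(\s)$ by \eqref{eq-1}; this cancellation is what renders the merely one-sided differentiability of $\bar t_\e$ harmless. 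Using $\hat M' = M$, the positivity $M(\|u_0\|_{\E_p}^p) > 0$, and dominated convergence for the integral terms, what survives is the one-sided directional derivative of $I_{\la,M}$ at $u_0$,
$$F'_{\pm}(0) = M(\|u_0\|_{\E_p}^p)\,\E_p^{\pm}(u_0,w) - \la\int_\s f(x)|u_0|^{q-2}u_0 w\dm - \int_\s g(x)|u_0|^{l-2}u_0 w\dm.$$

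Finally, minimality forces $F'_{+}(0) \ge 0 \ge F'_{-}(0)$, which rearranges to
$$M(\|u_0\|_{\E_p}^p)\,\E_p^{-}(u_0,w) \le \la\int_\s f|u_0|^{q-2}u_0 w\dm + \int_\s g|u_0|^{l-2}u_0 w\dm \le M(\|u_0\|_{\E_p}^p)\,\E_p^{+}(u_0,w).$$
Since $M(\|u_0\|_{\E_p}^p) > 0$ and $\E_p(u_0,w) = [\E_p^{-}(u_0,w),\E_p^{+}(u_0,w)]$, this is precisely $\la\int_\s f|u_0|^{q-2}u_0 w\dm + \int_\s g|u_0|^{l-2}u_0 w\dm \in M(\|u_0\|_{\E_p}^p)\E_p(u_0,w)$. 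As $w \in \dom_0(\E_p)$ is arbitrary and $u_0 \in \dom_0(\E_p)\setminus\{0\}$, this shows $u_0$ is a nontrivial weak solution; the same steps applied to $v_0$ on $\M_{\la,M}^-(\s)$ conclude the proof.

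The step I expect to be the main obstacle — and the reason Lemma \ref{lem-3} is engineered the way it is — is controlling the \emph{interval-valued} directional derivative $\E_p(u_0,w)$. Because $\E_p$ is only convex (not differentiable) in directions transverse to the radial one, one cannot simply set a single Gateaux derivative to zero; instead the minimum must be leveraged to pin $0$ inside the compact interval $\E_p(u_0,w)$ through the two one-sided inequalities $F'_+(0)\ge 0\ge F'_-(0)$. The radial smoothness of $\phi_u$ and the Nehari identity \eqref{eq-1} are exactly what let me isolate these one-sided directional derivatives cleanly, so the remaining verification (differentiability of the $f$- and $g$-integrals, positivity of $M$) is routine.
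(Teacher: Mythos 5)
Your proposal follows essentially the same route as the paper's own proof: perturb the minimizer along $u_0+\e w$, use Lemma \ref{lem-3} to stay on the Nehari-type set, exploit the identity $\phi'_{u_0}(1)=0$ to cancel the terms involving the variation of $\bar t_\e$, and sandwich the integral terms between $M(\|u_0\|_{\E_p}^p)\E_p^{-}(u_0,w)$ and $M(\|u_0\|_{\E_p}^p)\E_p^{+}(u_0,w)$ via the two one-sided derivative inequalities at the minimum. The argument is correct and matches the paper's proof in all essential respects.
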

\begin{proof}
Let $\psi \in \dom_0(\E_p).$ Using Lemma \ref{lem-3}(i), there exists $\e_0 > 0$ such that for each $\e \in (-\e_0,\e_0)$ there exists $\bar{t_\e}$ such that $I_{\la,M}(\bar{t_\e} (u_0 + \e \psi)) \geq I_{\la,M}(u_0)$ and $\bar{t_\e} \to 1$ as $\e \to 0.$ Then we have
\begin{align*}
0 &\leq \lim\limits_{\e \to 0^+} \frac{1}{\e}\left(I_{\la,M}(\bar{t_\e}(u_0 + \e \psi))- I_{\la,M}(u_0)\right) = \lim\limits_{\e \to 0^+} \frac{1}{\e}\left(I_{\la,M}(\bar{t_\e}(u_0 + \e \psi))- I_{\la,M}(\bar{t_\e}u_0) +I_{\la,M}(\bar{t_\e}u_0)-  I_{\la,M}(u_0)\right)\\
  &= \lim\limits_{\e \to 0^+} \frac{1}{\e}\left(I_{\la,M}(\bar{t_\e}(u_0 + \e \psi))- I_{\la,M}(\bar{t_\e}u_0)\right)\\
  &= \lim\limits_{\e \to 0^+} \frac{1}{\e}\left(\frac{1}{p}\left(\frac{a }{k+1}\|\bar{t_\e}(u_0 + \e \psi)\|_{\E_p}^{p(k+1)} + b \|\bar{t_\e}(u_0 + \e \psi)\|_{\E_p}^p\right) - \frac{\la}{q} \int\limits_{\s} f(x)|\bar{t_\e}(u_0 + \e \psi)|^{q} \dm - \frac{1}{l} \int\limits_{\s} g(x)|\bar{t_\e}(u_0 + \e \psi)|^{l} \dm \right) \\
  &\quad -\left(\frac{1}{p}\left(\frac{a }{k+1}\|\bar{t_\e}u_0\|_{\E_p}^{p(k+1)} + b \|\bar{t_\e}u_0\|_{\E_p}^p\right) - \frac{\la}{q} \int\limits_{\s} f(x)|\bar{t_\e}u_0|^{q} \dm - \frac{1}{l} \int\limits_{\s} g(x)|\bar{t_\e}u_0|^{l} \dm\right)\\  
  &= \lim\limits_{\e \to 0^+} \frac{1}{\e} \frac{1}{p} \left(\frac{a }{k+1}\left(\|\bar{t_\e}(u_0 + \e \psi)\|_{\E_p}^{p(k+1)}-\|\bar{t_\e}u_0\|_{\E_p}^{p(k+1)}\right) + b \left(\|\bar{t_\e}(u_0 + \e \psi)\|_{\E_p}^p - \|\bar{t_\e}u_0\|_{\E_p}^p\right)\right)  \\
  &\quad- \lim\limits_{\e \to 0^+} \frac{1}{\e}\left(\frac{\la}{q} \int\limits_{\s} f(x)|\bar{t_\e}(u_0 + \e \psi)|^{q} \dm- \frac{\la}{q} \int\limits_{\s} f(x)|\bar{t_\e}u_0|^{q} \dm \right)\\
  &\quad -\lim\limits_{\e \to 0^+} \frac{1}{\e}\left( \frac{1}{l} \int\limits_{\s} g(x)|\bar{t_\e}(u_0 + \e \psi)|^{l} \dm - \frac{1}{l} \int\limits_{\s} g(x)|\bar{t_\e}u_0|^{l} \dm\right)\\
  &= \left(a\|u_0\|_{\E_p}^{pk} + b\right)  \E_p^+(u_0,\psi) - \la \int_{\s}f(x)|u_0|^{q-2}u_0\psi \dm -  \int_{\s}g(x)|u_0|^{l-2}u_0 \psi \dm.
\end{align*}
Note that the second equality follows by using $\lim\limits_{\e \to 0^+}\frac{1}{\e} (I_{\la,M}(\bar{t_\e}u_0)-  I_{\la,M}(u_0)) = 0$ because the limit is the same as $\phi'_{u_0}(1)$, which is zero.
This implies that
$$\la \int_{\s}f(x)|u_0|^{q-2}u_0\psi \dm + \int_{\s}g(x)|u_0|^{l-2}u_0 \psi \dm\leq \left(a\|u_0\|_{\E_p}^{pk} + b\right)  \E_p^+(u_0,\psi).$$
Similarly,
\begin{align*}
    0 &\geq \lim\limits_{\e \to 0^-} \frac{1}{\e}\left(I_{\la,M}(\bar{t_\e}(u_0 + \e \psi))- I_{\la,M}(u_0)\right)
    = \lim\limits_{\e \to 0^-} \frac{1}{\e}\left(I_{\la,M}(\bar{t_\e}(u_0 + \e \psi))- I_{\la,M}(\bar{t_\e}u_0) +I_{\la,M}(\bar{t_\e}u_0)-  I_{\la,M}(u_0)\right)\\
   &= \lim\limits_{\e \to 0^-} \frac{1}{\e}\left(I_{\la,M}(\bar{t_\e}(u_0 + \e \psi))- I_{\la,M}(\bar{t_\e}u_0)\right)\\
    &= \left(a\|u_0\|_{\E_p}^{pk} + b\right)  \E_p^-(u_0,\psi) - \la \int_{\s}f(x)|u_0|^{q-2}u_0\psi \dm -  \int_{\s}g(x)|u_0|^{l-2}u_0 \psi \dm.
\end{align*}
which implies that
$$ \la \int_{\s}f(x)|u_0|^{q-2}u_0\psi \dm + \int_{\s}g(x)|u_0|^{l-2}u_0 \psi \dm\geq \left(a\|u_0\|_{\E_p}^{pk} + b\right)  \E_p^-(u_0,\psi)$$
So,
$$\left(a\|u_0\|_{\E_p}^{pk} + b\right)  \E_p^-(u_0,\psi) \leq \la \int_{\s}f(x)|u_0|^{q-2}u_0\psi \dm + \int_{\s}g(x)|u_0|^{l-2}u_0 \psi \dm\leq \left(a\|u_0\|_{\E_p}^{pk} + b\right)  \E_p^+(u_0,\psi).$$
Hence
$$\la \int_{\s}f(x)|u_0|^{q-2}u_0\psi \dm + \int_{\s}g(x)|u_0|^{l-2}u_0 \psi \dm\in \left(a\|u_0\|_{\E_p}^{pk} + b\right)  \E_p(u_0,\psi).$$ for all $\psi \in \dom_0(\E_p).$ Therefore, $u_0$ is a weak solution to the problem \eqref{prob}.
Using similar arguments as in Lemma \ref{lem-3}(ii), there exists $\e_1 > 0$ such that for each $\e \in (-\e_1,\e_1)$ there exists $t_\e$ such that $I_{\la,M}(t_\e (v_0 + \e \psi)) \geq I_{\la,M}(v_0)$ and $t_\e \to 1$ as $\e \to 0.$  Then we have
\begin{align*}
0 &\leq \lim\limits_{\e \to 0^+} \frac{1}{\e}\left(I_{\la,M}(t_\e(v_0 + \e \psi) )- I_{\la,M}(v_0)\right) = \lim\limits_{\e \to 0^+} \frac{1}{\e}\left(I_{\la,M}(t_\e(v_0 + \e \psi) )- I_{\la,M}(t_\e v_0) + I_{\la,M}(t_\e v_0) - I_{\la,M}(v_0)\right)\\
  &= \lim\limits_{\e \to 0^+} \frac{1}{\e}\left(I_{\la,M}(t_\e(v_0 + \e \psi) )- I_{\la,M}(t_\e v_0)\right) \\
  &= \lim\limits_{\e \to 0^+} \frac{1}{\e} \frac{1}{p} \left(\frac{a }{k+1}\left(\|t_\e(v_0 + \e \psi)\|_{\E_p}^{p(k+1)}-\|t_\e v_0\|_{\E_p}^{p(k+1)}\right) + b \left(\|t_\e(v_0 + \e \psi)\|_{\E_p}^p - \|t_\e v_0\|_{\E_p}^p\right)\right)  \\
  &\quad- \lim\limits_{\e \to 0^+} \frac{1}{\e}\left(\frac{\la}{q} \int\limits_{\s} f(x)|t_\e (v_0 + \e \psi)|^{q} \dm- \frac{\la}{q} \int\limits_{\s} f(x)|t_\e v_0|^{q} \dm \right)\\
  &\quad -\lim\limits_{\e \to 0^+} \frac{1}{\e}\left( \frac{1}{l} \int\limits_{\s} g(x)|t_\e (v_0 + \e \psi)|^{l} \dm - \frac{1}{l} \int\limits_{\s} g(x)|t_\e v_0|^{l} \dm\right)\\
  &= \left(a\|v_0\|_{\E_p}^{pk} + b\right)  \E_p^+(v_0,\psi) - \la \int_{\s}f(x)|v_0|^{q-2}v_0\psi \dm -  \int_{\s}g(x)|v_0|^{l-2}v_0 \psi \dm.
\end{align*}
This implies that
$$\la \int_{\s}f(x)|v_0|^{q-2}v_0\psi \dm + \int_{\s}g(x)|v_0|^{l-2}v_0 \psi \dm \leq \left(a\|v_0\|_{\E_p}^{pk} + b\right)  \E_p^+(v_0,\psi).$$
Similarly,
\begin{align*}
  0 &\geq \lim\limits_{\e \to 0^-} \frac{1}{\e}\left(I_{\la,M}(t_\e(v_0 + \e \psi) )- I_{\la,M}(v_0)\right) = \lim\limits_{\e \to 0^-} \frac{1}{\e}\left(I_{\la,M}(t_\e(v_0 + \e \psi) )- I_{\la,M}(t_\e v_0) + I_{\la,M}(t_\e v_0) - I_{\la,M}(v_0)\right)\\
  &= \lim\limits_{\e \to 0^-} \frac{1}{\e}\left(I_{\la,M}(t_\e(v_0 + \e \psi) )- I_{\la,M}(t_\e v_0)\right) \\
    &= \left(a\|v_0\|_{\E_p}^{pk} + b\right)  \E_p^-(v_0,\psi) - \la \int_{\s}f(x)|v_0|^{q-2}v_0\psi \dm -  \int_{\s}g(x)|v_0|^{l-2}v_0 \psi \dm
\end{align*}
which implies that
$$\la \int_{\s}f(x)|v_0|^{q-2}v_0\psi \dm + \int_{\s}g(x)|v_0|^{l-2}v_0 \psi \dm \geq  \left(a\|v_0\|_{\E_p}^{pk} + b\right)  \E_p^-(v_0,\psi).$$
So,
$$ \left(a\|v_0\|_{\E_p}^{pk} + b\right)  \E_p^-(v_0,\psi) \leq \la \int_{\s}f(x)|v_0|^{q-2}v_0\psi \dm + \int_{\s}g(x)|v_0|^{l-2}v_0 \psi \dm \leq \left(a\|v_0\|_{\E_p}^{pk} + b\right)  \E_p^+(v_0,\psi).$$
Hence
$$\la \int_{\s}f(x)|v_0|^{q-2}v_0\psi \dm + \int_{\s}g(x)|v_0|^{l-2}v_0 \psi \dm \in \left(a\|v_0\|_{\E_p}^{pk} + b\right)  \E_p(v_0,\psi)$$ for all $\psi \in \dom_0(\E_p).$ Therefore, $v_0$ is a weak solution to the problem \eqref{prob}.
\end{proof}

Now we give the proof of Theorem \ref{main} below.
\begin{proof}{(proof of Theorem \ref{main})}
 In Lemma \ref{lem-7} we have obtained $\widehat{\la}_1$, in Theorem \ref{thm-1} and Theorem \ref{thm-2} we have shown the existence of minimizers in the respective subsets and Theorem \ref{thm-8} we have shown that the problem \eqref{prob} has two nontrivial solutions. This completes the proof.
\end{proof}

\footnotesize
\nocite{}
\bibliographystyle{abbrv}
\bibliography{ref}
\end{document}